\documentclass[12pt]{amsart}
\usepackage{amssymb,amsmath}
\usepackage{geometry}    
\usepackage{mathrsfs}

\usepackage{graphicx}

\usepackage{vmargin}

\usepackage{tikz}  

\setmargrb{1in}{1in}{1in}{1in}

\newtheorem{theorem}{Theorem}[section]
\newtheorem{lemma}[theorem]{Lemma}
\newtheorem{proposition}{Proposition}
\newtheorem{corollary}[theorem]{Corollary}
\theoremstyle{definition}

\newtheorem{remark}{Remark}

\newtheorem{conjecture}{Conjecture}
\newtheorem{problem}{Problem}

\usepackage{yfonts}

\usepackage{xcolor}

\usepackage{bbm}

\begin{document}
	
	\title[Disproof of the uniform Littlewood conjecture]{Disproof of the uniform Littlewood conjecture}
	
	\author{Johannes Schleischitz}

	\thanks{ School of Computer, data and mathematical sciences (CDMS),
    Western Sydney University, Australia  \\
		J.schleischitz@westernsydney.edu.au}

\begin{abstract}
      We show that the uniform Littlewood Conjecture (ULC) recently introduced by Bandi, Fregoli and Kleinbock is false. More precisely the counterexamples form a residual set, the method further suggests positive Hausdorff dimension.
      For a mildly twisted
      problem, we indeed separately show that the Hausdorff dimension is at least $1$.
      Moreover, we disprove a uniform version of the $p$-adic Littlewood problem,
      as well as some twisted weaker version of a more general $S$-arithmetic setting, for any proper subset (possible infinite) of primes $S$. The latter contrasts the classical (non-uniform) case where the answer is known to be affirmative when $S$ has at least two elements. The disproof of ULC, our main new result, is semi-constructive; the non-constructive part involves effective 
      results on Zaremba's famous conjecture
      by Bourgain and Kontorovich, as well as estimates for the cardinality 
      of product sets over finite fields.
\end{abstract}

\maketitle

{\footnotesize{

		{\em Keywords}: multiplicative Diophantine approximation, uniform approximation, Littlewood conjecture \\
		Math Subject Classification 2020: 11J13}}

\section{ On the uniform Littlewood conjecture }  \label{intro}


    The famous Littlewood Conjecture 
    asks if for any real numbers $\xi,\zeta$ we have
    \begin{equation} \label{eq:lw}
        \liminf_{q\in \mathbb{Z}, q\to\infty} q \Vert q\xi\Vert\cdot \Vert q \zeta\Vert=0.
    \end{equation}
     Here $\Vert.\Vert$ denotes the distance to the nearest integer.
    In~\cite{bfk}, Bandi, Fregoli and Kleinbock 
    studied a uniform version.
    They showed that the set of pairs of real numbers $\xi, \zeta$ satisfying 
    \begin{equation} \label{eq:LW}
        \lim_{Q\to\infty} Q \min_{q\in \mathbb{Z}, 0<q\le Q} \Vert q\xi\Vert\cdot \Vert q \zeta\Vert=0
    \end{equation}
    still has full $2$-dimensional Lebesgue measure. In fact a more general version for $m\times n$ matrices is established.
    The question arises if there are any exceptions to \eqref{eq:LW}, or the more general claim, at all. This was called uniform Littlewood conjecture (ULC) in~\cite{bfk}. We state this reformulation of~\cite[Question 1.11]{bfk}  for $m=2, n=1$.

    \begin{conjecture}[ULC]
        For any real numbers $\xi, \zeta$ we have \eqref{eq:LW}.
    \end{conjecture}

   Another recent preprint by Kleinbock and Wu~\cite{kw}
   also addresses ULC above.
   We prove that ULC as stated above is false
   and in fact \eqref{eq:LW} only holds for a meager set, thus a small set in topological sense. For any set in Euclidean space $A\subseteq \mathbb{R}^k$
   we write $A+A=\{a+a: a\in A\}$ for the sumset, likewise for product sets $A\cdot A$ when $k=1$.

   \begin{theorem}  \label{T01}
       The ULC is false, we have
       \[
       \sup_{(\xi,\zeta)\in\mathbb{R}^2 }\;\left\{\limsup_{Q\to\infty} Q \min_{q\in \mathbb{Z},0<q\le Q} \Vert q\xi\Vert\cdot \Vert q \zeta\Vert\right\} > 0.005326 > \frac{1}{188}.
       \]
       More precisely, the set 
       \[
       \Theta:=\left\{ (\xi,\zeta)\in\mathbb{R}^2: \;\limsup_{Q\to\infty} Q \min_{q\in \mathbb{Z},0<q\le Q} \Vert q\xi\Vert\cdot \Vert q \zeta\Vert > \frac{1}{188}  \right\}
       \]
       is a dense $G_{\delta}$ set.
       Consequently $\Theta$ has full packing dimension and $\Theta+\Theta=\mathbb{R}^2$.
   \end{theorem}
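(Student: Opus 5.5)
The plan is to build the counterexamples out of rational pairs and then pass to a Baire category argument. For an integer $N$ and residues $a,b$ coprime to $N$, consider $(\xi,\zeta)=(a/N,b/N)$. For $0<q<N$ put $x\equiv qa$, $y\equiv qb \pmod N$, taken as absolutely least residues. Then $\Vert q\xi\Vert\cdot\Vert q\zeta\Vert=|x||y|/N^2$, and $y\equiv \lambda x \pmod N$ with $\lambda:=ba^{-1}$. The goal is a finite-level statement: for suitable $N$ and a suitable choice of $(a,b)$ there is a $Q=Q(N)$ of size comparable to $N$ (say $Q\in[\delta N,N)$) such that
\[
Q\min_{0<q\le Q}\Vert q a/N\Vert\cdot\Vert q b/N\Vert>\frac{1}{188}+\epsilon .
\]
This inequality involves only finitely many $q$, and each is strictly bounded away from the integers. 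So it persists on an open neighbourhood $V_{N,a,b}$ of $(a/N,b/N)$ in $\mathbb{R}^2$.

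For each $m$ let $U_m$ be the union of all $V_{N,a,b}$ with $N\ge m$. Each $U_m$ is open, and $\bigcap_m U_m\subseteq\Theta$, since every point of the intersection admits arbitrarily large $Q$ with the required lower bound. So $\Theta$ contains a $G_\delta$ set. It then suffices to show that each $U_m$ is dense, i.e.\ that good pairs $(a/N,b/N)$ with $N\ge m$ come within any $\eta$ of any prescribed point of the torus. By Baire's theorem this makes $\Theta$ residual. The consequences are then standard: a dense $G_\delta$ set has full packing dimension, and $\Theta+\Theta=\mathbb{R}^2$ because for each $z$ the set $z-\Theta$ is also dense $G_\delta$, so it meets $\Theta$.

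\textbf{Choosing the rationals.} I would first pick $N$ from the set of denominators admitted by the Bourgain--Kontorovich theorem on Zaremba's conjecture (a density-one set), with $a$ such that $a/N$ has all partial quotients bounded by an absolute $A$. This gives $|qa|_N\gg N/(Aq)$ for all $0<q<N$, so $x$ cannot be small relative to $q$, and $|x|$ controls $q$ from below. The constraint then becomes: for the pairs $(x,y)$ with $|x||y|<cN^2/Q$ that actually arise from some $q\le Q$, we need $y\not\equiv\lambda x$. The "bad" values of $\lambda$ are the quotients $y x^{-1}$ over a set of small pairs, i.e.\ a subset of a product set $S\cdot S^{-1}$ in $\mathbb{F}_N^*$ (take $N$ prime, or pass to a prime factor). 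Using the Zaremba restriction, which ties $q$ to $x$ and cuts the admissible $x$ for $q\le Q$, the size of the bad set should come out below $\theta N$ for a suitable choice of $Q/N$ and $c$. To bound it I would use cardinality estimates for such product sets over finite fields. Optimising the constants in this count is what should give the explicit $1/188$.

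\textbf{Main obstacle and density.} A crude count of pairs with $|x||y|<cN$ is of order $cN\log N$, which exceeds $N$. So the main difficulty is to exploit the Zaremba structure of $a$, together with $q\le Q$, to get rid of the logarithm and show that a positive proportion of $\lambda\in\mathbb{F}_N^*$ is good. Once that proportion is available, density is easier. Good $a$ with bounded partial quotients are themselves spread out modulo $N$, which I would try to extract from Bourgain--Kontorovich as equidistribution of the Zaremba numerators. And $\lambda$ ranges over a set of density at least $1-\theta$, which forces $b=\lambda a$ into every interval of length $\eta N$ once $N$ is large. If Zaremba numerators are not directly equidistributed, the fallback is to act on $(a,b)$ by multiplying both by a unit. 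This preserves $\lambda$ but not the Zaremba property, so it would have to be combined with the explicit freedom in choosing the continued fraction of $a/N$. This density step is semi-constructive, and is where the non-constructive input enters.
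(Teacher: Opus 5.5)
\textbf{Gap.} Your Baire-category framework is sound: common denominator $N$, $Q\asymp N$, openness of the finite-level inequality, and intersecting the dense open sets $U_m$. The problem is the core step, which puts the Zaremba condition on the wrong object and relies on a counting claim that is false.

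A Zaremba condition on $a$ cannot remove the logarithm. Let $x_q$ be the absolutely least residue of $qa$ and $Q=\delta N$. For any $a$ coprime to $N$, pigeonholing the points $\{qa/N\}$, $q\le Q/2$, into arcs of length $K/N$ gives $\#\{q\le Q:\ |x_q|\le K\}\gg \delta K$ for $1/\delta\ll K\le N$. Consequently a random $\lambda$ violates the required inequality $|x_q|\,\Vert \lambda x_q/N\Vert> cN/Q$ for $\gg c\log N$ values of $q$ in expectation. A Zaremba $a$ only makes the $x_q$ equidistributed, which is exactly what produces the $\log N$.

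The extreme case $Q=N-1$, which your range allows, is provably fatal. There every nonzero residue occurs as some $x_q$, so $\lambda$ is good exactly when $\min_{1\le x\le N/2}x\Vert \lambda x/N\Vert> cN/(N-1)$. This forces all partial quotients of $\lambda/N$ to be $O(1/c)$, which holds for only $o(N)$ residues $\lambda$. So the bad set is not below $\theta N$, and the good $\lambda$ do not have density $\ge 1-\theta$. Your density step needs $\theta<\eta$ with $\eta$ arbitrary, so it collapses.

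\textbf{Missing idea.} Put the Zaremba condition on $\lambda=ba^{-1}$ itself. If $N$ is prime and $\lambda/N\in F_A$, then for every $0<q<N$, Proposition~\ref{p2} together with the best approximation property gives $\Vert q\xi\Vert\,\Vert q\zeta\Vert=|x_q|\,\Vert\lambda x_q/N\Vert/N\ge 1/((A+2)N)$. So $Q=N-1$ works with no counting at all.

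Such $\lambda$ form a thin set, so density must come from a thin-set statement rather than a positive proportion. Two options:
\begin{itemize}
\item Lemma~\ref{dadada} with $\mathscr{M}=\mathscr{M}_N$ and $\mathcal{H}=JN\cap\mathbb{Z}$. All but $O(N^{1-\eta})$ residues $a$, hence some $a\in IN$, satisfy $\lambda a \bmod N\in JN$ for some Zaremba numerator $\lambda$.
\item More simply, the three gap theorem for a single Zaremba $\lambda$: any $L$ consecutive multiples of $\lambda/N$ leave no gap longer than $(A+2)/L$ modulo $1$.
\end{itemize}
Your unit-multiplication fallback is exactly right in this setting, since it preserves $\lambda$.

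With Bourgain--Kontorovich on primes ($A=50$), this completes your scheme with a constant close to $1/52$. It is a genuinely different and simpler route than the paper's. The paper uses two distinct primes $q\asymp s$, places $\xi,\zeta$ at distance $\asymp q^{-3}$ from $p/q$ and $r/s$, and takes $Q\asymp qs$. It must then secure $\{ps/q\},\{qr/s\}\in F_{50}$ simultaneously, via Lemma~\ref{dadada} and a pigeonhole over pairs of primes.
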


     \begin{remark}  \label{remark1}
    The best constant derived unconditionally from our method is some irrational algebraic number slightly larger than the stated bound, derived from some rather tedious optimization problem. We did not put effort to find its minimal polynomial, as it is not expected to be optimal.
    \end{remark}

    The numerical constant is considerably smaller than the expected value, as discussed in more detail in~\S~\ref{below} below.
    Our proof has a semi-constructive character. The first key idea is a concrete construction of counterexamples, however some rather non-constructive arguments are used to justify that this construction is well-defined. A key ingredient for the non-constructive part is a quantitative result by Bourgain and Kontorovich~\cite{bko} on Zaremba's conjecture. Further some result on cardinality of product sets in finite fields are needed, which can be settled with Erd\H{o}s Turan inequality and some classical exponential sum estimate due to Vinogradov. The method directly yields the density of $\Theta$. The $G_{\delta}$ property can be proved separately in a rather straightforward way, the bound $1/188$ within $\Theta$ is irrelevant for this argument. The implications on packing dimension and sumsets
     in Theorem~\ref{T01} are readily deduced as follows: Proposition~\ref{erpro} below, essentially due to Erd\H{o}s~\cite{erd}, settles the sumset implication. 

       \begin{proposition}[Erd\H{o}s]  \label{erpro}
         If $A, B\subseteq \mathbb{R}^k$ are dense $G_{\delta}$ sets then $A+B=\mathbb{R}^k$,
         and if $k=1$ also $A\cdot B=\mathbb{R}$.
     \end{proposition}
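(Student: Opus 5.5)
Fix $x\in\mathbb{R}^k$ and apply the Baire category theorem to the two dense $G_\delta$ sets $A$ and $x-B=\{x-b: b\in B\}$. The map $b\mapsto x-b$ is a homeomorphism of $\mathbb{R}^k$. A homeomorphism sends open sets to open sets and dense sets to dense sets, and it commutes with countable intersections. So $x-B$ is again a dense $G_\delta$ set. In the complete metric space $\mathbb{R}^k$, the intersection of two dense $G_\delta$ sets is a countable intersection of dense open sets. By Baire's theorem it is dense, and in particular nonempty. Choose $a\in A\cap(x-B)$ and write $a=x-b$ with $b\in B$; then $x=a+b\in A+B$. Since $x$ was arbitrary, $A+B=\mathbb{R}^k$.

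For the product statement with $k=1$, fix $x\in\mathbb{R}$ and split into two cases.

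\emph{Case $x\neq 0$.} The map $\phi(b)=x/b$ is a homeomorphism of $\mathbb{R}\setminus\{0\}$ onto itself. The set $B\setminus\{0\}=B\cap(\mathbb{R}\setminus\{0\})$ is a dense $G_\delta$ in $\mathbb{R}\setminus\{0\}$, and hence so is $\phi(B\setminus\{0\})$. Because $\mathbb{R}\setminus\{0\}$ is open in $\mathbb{R}$, a $G_\delta$ subset of it is a $G_\delta$ subset of $\mathbb{R}$, and it remains dense in $\mathbb{R}$. Intersecting with $A$ and applying Baire as above yields some $a\in A$ with $a=x/b$ for a nonzero $b\in B$, so $x=ab$.

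\emph{Case $x=0$.} Here we need $0\in A\cdot B$. The set $A\setminus\{0\}$ is still dense $G_\delta$, so Baire alone does not produce a zero factor. I see two routes and would try the first.
\begin{itemize}
\item Reduce to the case already done: if $0\in A$ or $0\in B$, then $0=0\cdot b$ or $0=a\cdot 0$ for any element of the other set, which is nonempty since it is dense.
\item Otherwise, check whether the intended statement only concerns $\mathbb{R}\setminus\{0\}$, or whether in the application the sets contain $0$ or are closed under a suitable symmetry.
\end{itemize}
The hypotheses as stated do not obviously give $0\in A\cdot B$. For instance, $A=B=\mathbb{R}\setminus\{0\}$ are dense $G_\delta$ sets, yet $A\cdot B$ misses $0$. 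So the main obstacle is this degenerate point, and the second route is the more likely resolution. I expect the statement to be meant with $\mathbb{R}$ replaced by $\mathbb{R}\setminus\{0\}$ for products, or to need $0$ handled separately. The sumset argument is the part actually used for Theorem~\ref{T01}, via $\Theta+\Theta=\mathbb{R}^2$.
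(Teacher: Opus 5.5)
Your proof is correct and is the same Baire category argument as Erd\H{o}s's proof, which the paper cites: intersecting $A$ with the dense $G_\delta$ set $x-B$, resp.\ with the image of $B\setminus\{0\}$ under the homeomorphism $b\mapsto x/b$ of $\mathbb{R}\setminus\{0\}$, gives $A+B=\mathbb{R}^k$ and $A\cdot B\supseteq\mathbb{R}\setminus\{0\}$. Your objection at $x=0$ is also right. The example $A=B=\mathbb{R}\setminus\{0\}$ shows the product claim is false as stated, and the correct form is $A\cdot B\supseteq\mathbb{R}\setminus\{0\}$, with $A\cdot B=\mathbb{R}$ exactly when $0\in A\cup B$ (your first route). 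The same correction is needed for $\Gamma\cdot\Gamma=\tilde{\Gamma}\cdot\tilde{\Gamma}=\mathbb{R}$ in Theorem~\ref{t2}: for $\xi=0$ every $\Vert q\xi\Vert$ vanishes, so $0\notin\Gamma\cup\tilde{\Gamma}$.
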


       Its short proof in~\cite{erd} relies on Baire's Theorem.
      The implication regarding packing dimension
     is consequence of the
     below result from the arXiv resource~\cite[Theorem 3.3]{icharxiv}. 

    \begin{lemma}[\cite{icharxiv}] \label{icha}
        If $R\subseteq \mathbb{R}^k$ is residual (dense $G_{\delta}$) then it has full packing dimension.
    \end{lemma}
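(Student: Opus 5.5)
The plan is to use the characterization of packing dimension through upper box dimension, and to combine it with Baire's theorem. Recall the standard formula
\[
\dim_P E=\inf\Big\{\sup_{i\ge 1}\overline{\dim}_B E_i:\; E\subseteq \bigcup_{i\ge 1} E_i\Big\},
\]
where the infimum is over all countable covers of $E$ by bounded sets $E_i$, and $\overline{\dim}_B$ denotes upper box dimension. Since packing dimension is monotone and $\dim_P \mathbb{R}^k=k$, it suffices to prove $\dim_P R\ge k$. For this we show that every countable cover of $R$ contains some $E_i$ with $\overline{\dim}_B E_i=k$.

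First we fix an arbitrary countable cover $R\subseteq \bigcup_i E_i$. Upper box dimension does not change under closure, that is, $\overline{\dim}_B E_i=\overline{\dim}_B \overline{E_i}$. This holds because covering numbers by $\delta$-cubes of $E_i$ and of $\overline{E_i}$ agree up to a constant factor. So we may replace each $E_i$ by its closure $F_i=\overline{E_i}$ and still have $R\subseteq\bigcup_i F_i$.

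Next we apply Baire's theorem. Since $R$ is residual, its complement $\mathbb{R}^k\setminus R$ is meager. If every $F_i$ were nowhere dense, then $\mathbb{R}^k=(\mathbb{R}^k\setminus R)\cup\bigcup_i F_i$ would be meager, which contradicts the Baire category theorem for the complete metric space $\mathbb{R}^k$. Hence some closed set $F_{i_0}$ is not nowhere dense. A closed set that is not nowhere dense has nonempty interior, so $F_{i_0}$ contains an open ball $B$.

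Finally, monotonicity of upper box dimension gives $\overline{\dim}_B E_{i_0}=\overline{\dim}_B F_{i_0}\ge \overline{\dim}_B B=k$. The last equality holds because a ball of radius $r$ in $\mathbb{R}^k$ needs $\asymp (r/\delta)^k$ cubes of side $\delta$ to be covered. Thus every countable cover of $R$ has supremum of upper box dimensions at least $k$, so $\dim_P R\ge k$, and hence $\dim_P R=k$.

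The same argument applies to $R\cap U$ for any open $U$, because $R\cap U$ is residual in $U$. So $R$ even has full packing dimension locally. None of the steps poses a real obstacle. The only point needing care is the passage from $E_i$ to its closure, which is exactly why box dimension, and not Hausdorff dimension, is the right tool here.
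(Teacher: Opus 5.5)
Your proof is correct, but it takes a different route from the paper.

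The paper argues through sumsets. Both $R$ and the set $\mathcal{L}_k$ of Liouville vectors are dense $G_{\delta}$, so Erd\H{o}s' result (Proposition~\ref{erpro}) gives $R+\mathcal{L}_k=\mathbb{R}^k$. The paper then applies Tricot's inequality $\dim_H(A\times B)\le \dim_H A+\dim_P B$, pushed forward by the Lipschitz map $(x,y)\mapsto x+y$, and uses $\dim_H\mathcal{L}_k=0$. This yields $k\le \dim_H\mathcal{L}_k+\dim_P R=\dim_P R$.

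You instead characterize $\dim_P$ as modified upper box dimension and apply Baire's theorem directly to a countable closed cover. This is the standard textbook argument.

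Your route is more self-contained:
\begin{itemize}
\item It needs no auxiliary residual set of Hausdorff dimension zero, no sumset theorem and no product inequality.
\item It gives the local statement $\dim_P(R\cap U)=k$ for every open $U$ immediately.
\item It only uses that $R$ is non-meager (in $U$), not that it is a dense $G_{\delta}$.
\end{itemize}

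The paper's route fits its sumset theme and reuses tools already present, namely Proposition~\ref{erpro} and Tricot's inequality as in Theorem~\ref{thmlwood}. But it rests on more input: that $\mathcal{L}_k$ is dense $G_{\delta}$ with $\dim_H\mathcal{L}_k=0$, and Tricot's inequality. In standard treatments Tricot's inequality is itself derived from the same box-dimension characterization of $\dim_P$ that you use.
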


     Since~\cite{icharxiv} has never been published in a journal, we repeat the short proof involving Proposition~\ref{erpro} for convenience of the reader. Denote by $\mathcal{L}_k$ the set of Liouville vectors in $\mathbb{R}^k$, defined 
     as the set of real vectors $(\xi_1,\ldots,\xi_n)$ for which
       \begin{equation}  \label{eq:defana}
    \liminf_{q\in\mathbb{Z},\; q\to\infty} q^N \max_{1\le j\le k}  \Vert q \xi_j\Vert=0
    \end{equation}
    for any $N$.
     It is well-known that $\mathcal{L}_k$ form dense $G_{\delta}$ sets of Hausdorff dimension $0$, Oxtoby's book~\cite{oxtoby} contains short proofs, see also Lemma~\ref{gdelta} below.
     Denote by $\dim_H$ resp. $\dim_P$
     the Hausdorff resp. packing dimension of a Euclidean set.

    \begin{proof}
    Both $\mathcal{L}_k$ and $R$
    are residual, so the sumset satisfies $R+\mathcal{L}_k=\mathbb{R}^k$ by Proposition~\ref{erpro}. On the other hand, an estimate by Tricot~\cite{tricot} and the fact that $\dim_H \mathcal{L}_k=0$ 
    shows that 
    $\dim_H(R+\mathcal{L}_k)\le \dim_H \mathcal{L}_k + \dim_P R=\dim_P R$. Combination concludes the proof.
\end{proof}


     In contrast, the counterexamples to \eqref{eq:lw} form a meager set, let us call it $\mathcal{A}$. Indeed, its elements are badly approximable (see \eqref{eq:FGH} below for a definition) and hence lie in the complement of $\mathcal{L}_2$, which is dense $G_{\delta}$ by Proposition~\ref{erpro}. Thus the intersection $\mathcal{A}^c\cap \Theta$ via
     Baire's Theorem together with Theorem~\ref{T01}
     provide us with a dense $G_{\delta}$ set of vectors
     that satisfy the classical Littlewood Conjecture \eqref{eq:lw} but not the uniform version ULC. This in particular answers both parts a), b) of~\cite[Question~1.11]{bfk} in the negative, at least for $m=2, n=1$.
      In fact all
      the numbers $\xi, \zeta$ 
      forming counterexamples to ULC
      concretely constructed 
    in our proof of Theorem~\ref{T01}, thereby a subset of $\Theta$,
    both have irrationality exponent $\ge 3$; thus they certainly do not provide counterexamples for the classical Littlewood conjecture, as this would require them both to be badly approximable. 
    In this context we further point out
    that again by Baire's Theorem the set $\mathcal{L}_2\cap \Theta$ is dense $G_{\delta}$ as well. In particular the numbers $\xi,\zeta$ 
    in Theorem~\ref{T01} can be chosen
    Liouville numbers, in fact forming Liouville vectors, a stronger property.
     On the other hand, we do not know if all elements of $\Theta$ (or more generally counterexamples to ULC) exhibit good ordinary (i.e. non-uniform) approximation properties, see Problem~\ref{P3} in~\S~\ref{oppro} below.


\section{A twisted problem} \label{s1.2}
In the next section we address some variant of the ULC.
It is possible to obtain metrical refinements of Theorem~\ref{T01} if we twist the right hand side of ULC by a function that tends to zero (arbitrarily slowly).
This result Corollary~\ref{tth} below, as well as some other claims, will follow from some more general going up principle Theorem~\ref{2t}.
The proofs of all results in this section are very different and considerably easier than for the main result Theorem~\ref{T01}. 

We introduce some notation.
For $\Psi: \mathbb{N}\to (0,\infty)$ any function and $m\ge 1$ an integer, define
\[
\mathcal{A}_m(\Psi):= \left\{ (\xi_1,\ldots,\xi_m)\in\mathbb{R}^m: \; 
 \liminf_{q\to\infty} \frac{q}{\Psi(q)} \prod_{i=1}^{m} \Vert q\xi_i\Vert > 0\right\},
\]
and
\[
\mathcal{B}_m(\Psi)= \left\{ (\xi_1,\ldots,\xi_m)\in\mathbb{R}^m: \; 
 \limsup_{Q\to\infty} \frac{Q}{\Psi(Q)} \min_{q\in \mathbb{Z},0<q\le Q} \prod_{i=1}^{m} \Vert q\xi_i\Vert > 0\right\}.
\]
The
sets $\mathcal{A}_m(\Psi)$ resp. $\mathcal{B}_m(\Psi)$
are essentially the complements of the classical sets of multiplicatively ordinarily resp. uniformly $\Psi(k)/k^2$ approximable real column vectors, 
so they almost correspond to $W_{m,1}^{\times}(\Psi(k)/k)^c$ resp. $D_{m,1}^{\times}(\Psi(k)/k)^c$ in notation of~\cite{bfk}.
Clearly $\mathcal{B}_m(\Psi)\supseteq \mathcal{A}_m(\Psi)$ for any $m\ge 1$. If $\mathbbm{1}$ denotes the constant $1$ function,
we have
$\mathcal{A}_2(\mathbbm{1})=\mathcal{A}$ with the latter set defined in~\S~\ref{intro}, so
the Littlewood Conjecture
\eqref{eq:lw} states that $\mathcal{A}_2(\mathbbm{1})=\emptyset$,
whereas Theorem~\ref{T01} implies that
$\mathcal{B}_2(\mathbbm{1})\supseteq \Theta$ is 
dense $G_{\delta}$, thus non-empty. 

We prove some going up inclusion theorem involving ordinary and uniform approximation sets and discuss interesting special cases below.

\begin{theorem}  \label{2t}
     Let $m\ge 2$ be an integer. Let 
     $\Psi: \mathbb{N}\to (0,\infty)$ be any function and
$\Phi: \mathbb{N}\to (0,\infty)$ be any function tending to infinity as $t\to\infty$, arbitrarily slowly.
    Then there is a dense $G_{\delta}$ set $\mathcal{G}=\mathcal{G}(\Phi)\subseteq \mathbb{R}$ such that for any function $\Psi$ we have
    \[
    \mathcal{B}_m(\Psi/\Phi)\supseteq \mathcal{A}_{m-1}(\Psi)\times \mathcal{G}.
    \]
\end{theorem}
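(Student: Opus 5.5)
The plan is to take for $\mathcal{G}$ a Liouville-type set of numbers $\zeta$ that admit, infinitely often, an extremely good rational approximation $p/N$. For $q$ below a threshold comparable to $N$, the factor $\Vert q\zeta\Vert$ is then bounded below by roughly $1/N$, while the first $m-1$ coordinates are controlled by the hypothesis $(\xi_1,\ldots,\xi_{m-1})\in\mathcal{A}_{m-1}(\Psi)$. The slowly growing $\Phi$ supplies the room needed to absorb constants and the loss from evaluating at a suitable $Q$. I have not closed this argument: the matching in the second paragraph is unresolved for non-monotone $\Psi$.

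\textbf{Construction of $\mathcal{G}$ and the lower bound for $\Vert q\zeta\Vert$.} Fix $\Phi$, and choose a rapidly decaying radius function $r(N)$ depending only on $\Phi$, e.g. $r(N)=1/(N^{3}\Phi(N))$. Put
\[
\mathcal{G}=\bigcap_{M\ge 1}\;\bigcup_{N\ge M}\;\bigcup_{\gcd(p,N)=1}\left(\frac{p}{N}-r(N),\frac{p}{N}+r(N)\right).
\]
Each union is open and dense because it contains all rationals with large reduced denominator, so $\mathcal{G}$ is dense $G_\delta$ by Baire's Theorem. This is the same mechanism as for $\mathcal{L}_1$, cf.\ Lemma~\ref{gdelta}. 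For $\zeta\in\mathcal{G}$ with $|\zeta-p/N|<r(N)$ and any $0<q<N$, we have $\Vert qp/N\Vert\ge 1/N$ since $\gcd(p,N)=1$. Hence $\Vert q\zeta\Vert\ge 1/N-q\,r(N)\ge 1/(2N)$. So for $Q=N-1$,
\[
\min_{0<q\le Q}\Vert q\zeta\Vert\prod_{i<m}\Vert q\xi_i\Vert\;\ge\;\frac{1}{2N}\min_{0<q<N}\prod_{i<m}\Vert q\xi_i\Vert .
\]

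\textbf{Matching with $\Psi/\Phi$.} For the $m-1$ block, the hypothesis gives $\prod_{i<m}\Vert q\xi_i\Vert\ge c\Psi(q)/q$ for $q\ge q_0$. The finitely many $q<q_0$ contribute a positive constant, because the $\xi_i$ are irrational; otherwise the product vanishes infinitely often, contradicting membership in $\mathcal{A}_{m-1}(\Psi)$. One then wants
\[
\frac{N}{\Psi(N)}\cdot\frac{1}{2N}\cdot\min_{q<N}\prod_{i<m}\Vert q\xi_i\Vert\;\gg\;\frac{1}{\Phi(N)}
\]
for infinitely many admissible $N$. The factor $\Phi(N)\to\infty$ absorbs the constants $c$, $2$ and the contribution of $q<q_0$. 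The remaining comparison is between $\min_{q<N}\Psi(q)/q$ and $\Psi(N)/N$.

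\textbf{Main obstacle.} This comparison is where I expect the difficulty, and the plan does not currently resolve it. For monotone $\Psi(q)/q$ it is immediate. For arbitrary $\Psi$, which the statement allows, the comparison fails. Moreover, $\mathcal{G}$ may depend only on $\Phi$, so I cannot simply choose the denominators $N$ adapted to $\Psi$.

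What I would try first is to use $\Phi$ more cleverly by enlarging $Q$ beyond $N$. Up to $Q\approx N\Phi(N)^{1/2}$, multiples $q=kN$ still give $\Vert q\zeta\Vert\approx kN\,r(N)$, which remains controlled for the chosen $r$. Non-multiples still give $\Vert q\zeta\Vert\ge 1/(2N)$. This lets one pick $Q$ in a long window where $\Psi(Q)/Q$ is not too large relative to earlier values, with the tiny radius $r(N)$ making the multiples harmless.

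If that fails, I would check whether the intended inclusion tacitly uses the trivial bound $\Psi(q)\ll q$ implied by $\mathcal{A}_{m-1}(\Psi)\ne\emptyset$, together with a choice of $Q$ inside $[N,N\Phi(N)^{1/2}]$. Otherwise the statement would need $\Psi$ to satisfy a mild regularity assumption.
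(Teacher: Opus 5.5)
\textbf{The failure is at the scale of $Q$, before the non-monotone issue you flag.} At $Q=N-1$ your bound $\Vert q\zeta\Vert\ge 1/(2N)$ is only of Dirichlet size, and it loses a factor $N/\Phi(N)$. Take the most favourable case $\min_{q<N}\Psi(q)/q\asymp\Psi(N)/N$, for instance $\Psi\equiv 1$. Then the left side of your displayed target is only bounded below by $\asymp c/N$, whereas you need $\gg 1/\Phi(N)$. So $\Phi$ would have to absorb a factor $N$, not just constants. Your argument therefore already fails for $\Psi\equiv 1$, $m=2$, $\xi_1$ badly approximable, and the claim that the monotone case is immediate is wrong.

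The fallback does not repair this. For $Q\le N\Phi(N)^{1/2}$, non-multiples of $N$ still only give $\Vert q\zeta\Vert\ge 1/(2N)$. That beats $\Psi(Q)/(Q\Phi(Q))$ only if $\Phi(Q)\gg N$, i.e.\ $Q\ge \Phi^{-1}(N)$, which is far outside that window. For multiples $kN$ you need a \emph{lower} bound $|N\zeta-p|\gg N/(Q\Phi(Q))$. A set defined by the one-sided condition $|\zeta-p/N|<r(N)$ does not provide this. With $r(N)=N^{-3}\Phi(N)^{-1}$ it is also of the wrong size: multiples would force $Q\gg N^3$, while the bound for non-multiples breaks down once $Q\gg N^2\Phi(N)$.

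\textbf{The missing idea, which is the paper's proof.} Evaluate at a $Q$ far beyond the denominator, with $N<\Phi(Q)$ and the two-sided control $\tfrac16\le Q\,|N\zeta-p|\le\tfrac13$. So the radius must be tied to $\Phi^{-1}$, of order $1/(N\Phi^{-1}(N))$, and two-sided. Then for $0<q\le Q$ there are two cases.
\begin{itemize}
\item If $(r,q)$ is not a multiple of $(p,N)$, the determinant argument of Proposition~\ref{dieprop} gives $|q\zeta-r|\ge 1/(3N)>1/(3\Phi(Q))$. Combine this with the crude bound $\Psi(q)/q\ge \Psi(q)/Q$.
\item If $(r,q)=k(p,N)$, then $|q\zeta-r|=k|N\zeta-p|\ge k/(6Q)$. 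This cancels $1/q=1/(kN)$ and leaves $\Psi(q)/(6NQ)$.
\end{itemize}
In both cases one only needs $\Psi(q)\gg\Psi(Q)$ for $q\le Q$, so monotonicity of $\Psi(q)/q$ is never required. The paper takes $Q$ with $\Psi(Q)\le K\min_{t\le Q}\Psi(t)$:
\begin{itemize}
\item record minima if $\liminf\Psi=0$;
\item a liminf-realising sequence if $0<\liminf\Psi<\infty$;
\item if $\liminf\Psi=\infty$, then $\mathcal{A}_{m-1}(\Psi)=\emptyset$ by Dirichlet, so there is nothing to prove.
\end{itemize}

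\textbf{Your worry about $\Psi$-independence is well founded.} In the paper this $Q$, and hence the nested intervals, is chosen depending on $\Psi$. Case~1 pins $Q$ to within a factor $2$ of $|N\zeta-p|^{-1}$. So the closing assertion that a $\Psi$-independent Liouville-type set works ``upon choosing suitable $Q$'' is evident only when \eqref{eq:truhe} holds for all large $Q$, e.g.\ $\Psi\equiv 1$, which is what Corollary~\ref{tth} uses. For general $\Psi$ the argument as written gives a dense $G_\delta$ set depending on $\Psi$ as well. Even that weaker statement, however, is out of reach of your construction because of the scale problem above.
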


Note that we do not require 
monotonicity of $\Psi$.
Let $Bad=\mathcal{A}_1(\mathbbm{1})$ denote the set of badly approximable real numbers. 
For $m=2$ we readily get the aforementioned metrical refinement.

\begin{corollary} \label{tth}
    Assume $\Psi(t)\to 0$ as $t\to\infty$. 
    Then there is a dense $G_{\delta}$ set $\mathcal{G}=\mathcal{G}(\Psi)\subseteq \mathbb{R}$ such that
    \begin{equation} \label{eq:Incl}
    \mathcal{B}_2(\Psi) \supseteq Bad \times \mathcal{G}.
    \end{equation}
    In particular $\mathcal{B}_2(\Psi)$ has Hausdorff dimension at least $1$ and full packing dimension. Moreover $\mathcal{B}_2(\Psi)$ is a dense $G_{\delta}$ set, and  the sumset $\mathcal{B}_2(\Psi)+\mathcal{B}_2(\Psi)$ equals $\mathbb{R}^2$.
\end{corollary}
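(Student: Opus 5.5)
The plan is to derive Corollary~\ref{tth} from Theorem~\ref{2t} with $m=2$ and a suitable choice of the pair of functions. Given $\Psi$ with $\Psi(t)\to 0$, I will put $\Phi:=1/\Psi$, which tends to infinity, and apply Theorem~\ref{2t} with the constant function $\mathbbm{1}$ in the role of the theorem's $\Psi$. Then $\mathbbm{1}/\Phi=\Psi$, so Theorem~\ref{2t} gives a dense $G_\delta$ set $\mathcal{G}=\mathcal{G}(\Phi)=\mathcal{G}(\Psi)$ with
\[
\mathcal{B}_2(\Psi)\supseteq \mathcal{A}_1(\mathbbm{1})\times\mathcal{G}=Bad\times\mathcal{G},
\]
which is \eqref{eq:Incl}. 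Theorem~\ref{2t} asks only that $\Phi$ take values in $(0,\infty)$ and tend to infinity; if $\Psi$ is allowed to vanish somewhere, I will modify it on finitely many values. This is harmless because membership in $\mathcal{B}_2$ depends only on the tail, since it is defined by a $\limsup$.

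For the Hausdorff dimension claim I will use the classical fact $\dim_H Bad=1$ (Jarník) together with the fact that $\mathcal{G}$ is residual and hence uncountable. I expect this to be the main subtle point, because the product inequality $\dim_H(E\times F)\ge \dim_H E+\dim_H F$ only yields $\ge 1+\dim_H\mathcal{G}\ge 1$. The bound $\dim_H\mathcal{G}\ge 0$ is trivial, so nothing more is needed. Alternatively, fix any $g\in\mathcal{G}$: the slice $Bad\times\{g\}$ is an isometric copy of $Bad$, so already $\dim_H\mathcal{B}_2(\Psi)\ge 1$.

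For the packing dimension I will use Tricot's product inequality $\dim_P(E\times F)\ge \dim_H E+\dim_P F$. By Lemma~\ref{icha}, $\dim_P\mathcal{G}=1$, so combined with $\dim_H Bad=1$ this gives $\dim_P(Bad\times\mathcal{G})\ge 2$, which is full packing dimension.

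The remaining claims concern $\mathcal{B}_2(\Psi)$ being dense $G_\delta$, and $Bad\times\mathcal{G}$ is meager, so they cannot come from \eqref{eq:Incl}. Instead I will note that $\Psi\to 0$ implies $\mathcal{B}_2(\Psi)\supseteq\mathcal{B}_2(\mathbbm{1})\supseteq\Theta$, since
\[
\frac{Q}{\Psi(Q)}\min_{0<q\le Q}\Vert q\xi\Vert\cdot\Vert q\zeta\Vert\;\ge\; Q\min_{0<q\le Q}\Vert q\xi\Vert\cdot\Vert q\zeta\Vert
\]
for large $Q$. By Theorem~\ref{T01}, $\Theta$ is dense $G_\delta$, so $\mathcal{B}_2(\Psi)$ is residual. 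The sumset claim then follows from Proposition~\ref{erpro} applied to $\Theta+\Theta=\mathbb{R}^2$, and full packing dimension can alternatively be read off from Lemma~\ref{icha}.

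If one wants $\mathcal{B}_2(\Psi)$ itself to be exactly a $G_\delta$ set, rather than merely containing one, I would write it as
\[
\bigcup_{k}\;\bigcap_{N}\;\bigcup_{Q\ge N}\left\{(\xi,\zeta):\ \tfrac{Q}{\Psi(Q)}\min_{0<q\le Q}\Vert q\xi\Vert\cdot\Vert q\zeta\Vert>\tfrac1k\right\}.
\]
The innermost sets are open by continuity of the finite minimum. To remove the outer union I would mimic the $G_\delta$ argument the paper announces for $\Theta$, which is the delicate step. If that proves awkward, I would interpret the statement as "contains a dense $G_\delta$ set," as for $\Theta$.
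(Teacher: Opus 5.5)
Your proposal is correct and matches the paper's proof: \eqref{eq:Incl} comes from Theorem~\ref{2t} with $m=2$, with the theorem's $\Psi$ set to $\mathbbm{1}$ and $\Phi=1/\Psi$. The Hausdorff bound comes from Jarn\'ik's $\dim_H Bad=1$, and the residuality, packing dimension and sumset claims follow a fortiori from $\Theta\subseteq\mathcal{B}_2(\mathbbm{1})\subseteq\mathcal{B}_2(\Psi)$ together with Theorem~\ref{T01}.

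The paper, like you, only establishes that $\mathcal{B}_2(\Psi)$ contains a dense $G_\delta$ set, so your caveat about the exact $G_\delta$ property applies equally to its argument. Your Tricot product-inequality route to full packing dimension is valid but not needed.
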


The main claim \eqref{eq:Incl} follows by letting $\Psi=\mathbbm{1}$ in Theorem~\ref{2t} and identifying $\Psi$ of the corollary with
$\tilde{\Psi}=\Psi/\Phi=1/\Phi$, which indeed captures any function tending to $0$.
Moreover we used the well-known fact the $Bad= \mathcal{A}_1(\mathbbm{1})$ has full Hausdorff dimension
due to Jarn\'ik~\cite{jarnik}, 
generalized in~\cite{schmidt}.
The dense $G_{\delta}$ property,
full packing dimension and sumset property  of $\mathcal{B}_2(\Psi)$ follow already a fortiori from Theorem~\ref{T01} as $\Theta\subseteq \mathcal{B}_2(\mathbbm{1})\subseteq \mathcal{B}_2(\Psi)$.
Very similarly for larger $m$ we can derive from Theorem~\ref{2t} 

\begin{corollary} \label{bewleicht}
    Let $m\ge 3$ be an integer. Assume $\Psi(t)\to 0$ as $t\to\infty$. 
     Then there is a dense $G_{\delta}$ set $\mathcal{G}=\mathcal{G}(\Psi)\subseteq \mathbb{R}$ such that
    \[
    \mathcal{B}_m(\Psi)\supseteq \mathcal{A}_{m-1}(\mathbbm{1})\times \mathcal{G}.
    \]
\end{corollary}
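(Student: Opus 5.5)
Corollary~\ref{bewleicht} should follow from Theorem~\ref{2t} in exactly the way Corollary~\ref{tth} does. Take $\Psi$ tending to $0$ as in the corollary. I will apply Theorem~\ref{2t} with the constant function $\mathbbm{1}$ in place of its $\Psi$, and with an auxiliary function $\Phi$ chosen so that $\mathbbm{1}/\Phi$ is comparable to the given $\Psi$.

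\textbf{Choice of $\Phi$.} The natural choice is $\Phi:=1/\Psi$, which tends to infinity because $\Psi(t)\to 0$ and $\Psi>0$. If one wants to avoid any issue with the definition of $\Phi$ at small arguments, one may instead put $\Phi(t):=\max\{1,1/\Psi(t)\}$. This still tends to infinity and satisfies $1/\Phi\le \Psi$ pointwise, with equality for all large $t$.

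\textbf{Applying Theorem~\ref{2t}.} Since the given $m\ge 3$ satisfies in particular $m\ge 2$, Theorem~\ref{2t} yields a dense $G_{\delta}$ set $\mathcal{G}=\mathcal{G}(\Phi)$, which depends only on $\Psi$, such that
\[
\mathcal{B}_m(\mathbbm{1}/\Phi)\supseteq \mathcal{A}_{m-1}(\mathbbm{1})\times\mathcal{G}.
\]

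\textbf{Passing from $\mathcal{B}_m(1/\Phi)$ to $\mathcal{B}_m(\Psi)$.} Membership in $\mathcal{B}_m(\tilde\Psi)$ is the condition
\[
\limsup_{Q\to\infty}\frac{Q}{\tilde\Psi(Q)}\min_{0<q\le Q}\prod_{i=1}^m\Vert q\xi_i\Vert>0 .
\]
Replacing $\tilde\Psi$ by a pointwise larger function can only decrease the expression under the $\limsup$. Enlarging $\tilde\Psi$ by at most a constant factor, or changing it on finitely many arguments, does not affect positivity of the $\limsup$. Hence $\mathcal{B}_m(1/\Phi)=\mathcal{B}_m(\Psi)$ whenever $1/\Phi=\Psi$ for large $t$, which gives the claimed inclusion.

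The only point requiring any care is this last step: checking that the change of $\Phi$ on finitely many arguments does not alter the set $\mathcal{B}_m$. As explained, that is immediate because the definition only involves a $\limsup$ as $Q\to\infty$.
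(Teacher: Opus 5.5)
Your proposal is correct and is exactly the paper's argument: apply Theorem~\ref{2t} with $\mathbbm{1}$ in place of its $\Psi$ and with $\Phi=1/\Psi$, so that $\mathcal{B}_m(\mathbbm{1}/\Phi)=\mathcal{B}_m(\Psi)$, just as in the derivation of Corollary~\ref{tth}. The truncation $\max\{1,1/\Psi\}$ is unnecessary, since $\Psi>0$ already makes $1/\Psi$ admissible; and your final step rightly uses eventual equality $1/\Phi=\Psi$, because the pointwise inequality $1/\Phi\le\Psi$ alone would only give $\mathcal{B}_m(1/\Phi)\supseteq\mathcal{B}_m(\Psi)$, which is the wrong direction.
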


One interpretation of Corollary~\ref{bewleicht} is that if $\mathcal{B}_3(\Psi)$ happens to have packing dimension less than $1$ for some $\Psi\to 0$, then the Littlewood Conjecture~\eqref{eq:lw} is true, in view of Lemma~\ref{icha} below. However, we believe the packing dimension is full.
Note that for $m=2$ the claim simplifies
to Corollary~\ref{tth}.
Clearly if the Littlewood conjecture \eqref{eq:lw} is true
then Corollary~\ref{bewleicht} is pointless. Another corollary reads as follows.

\begin{corollary} \label{lastc}
  Suppose $\Psi:\mathbb{N}\to (0,\infty)$ is any function such that
    \begin{equation} \label{eq:converges}
    \sum_{k=1}^{\infty} \frac{\Psi(k)}{k}<\infty.
    \end{equation}
    Then there exists some full Lebesgue measure set $\Omega\subseteq \mathbb{R}$ and some dense $G_{\delta}$ set $\mathcal{G}\subseteq \mathbb{R}$ so that
    $\mathcal{B}_2(\Psi)\supseteq \Omega\times \mathcal{G}$,
    or equivalently for any $\xi\in \mathcal{G}$ and $\zeta\in \Omega$
     \begin{equation*} 
    \limsup_{Q\to\infty} \frac{Q}{\Psi(Q)} \min_{q\in \mathbb{Z},0<q\le Q} \Vert q\xi\Vert\cdot \Vert q \zeta\Vert > 0.
    \end{equation*}
    Thus, by symmetry, the projection of $\mathcal{B}_2(\Psi)$ onto the coordinate axes has full measure.
\end{corollary}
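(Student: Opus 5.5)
We derive Corollary~\ref{lastc} from Theorem~\ref{2t} with $m=2$. The idea is to write the given $\Psi$ as $\tilde\Psi/\Phi$, where $\tilde\Psi$ still satisfies a convergent series condition and $\Phi\to\infty$. Then $\mathcal{A}_1(\tilde\Psi)$ has full measure by the convergence half of Khintchine's theorem.

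\textbf{Step 1 (the factor $\Phi$).} Since $\sum_k \Psi(k)/k<\infty$, a standard lemma (the one used for slowly growing multipliers of convergent series) gives a nondecreasing $\Phi:\mathbb{N}\to(0,\infty)$ with $\Phi(k)\to\infty$ and
\[
\sum_{k\ge1}\frac{\Phi(k)\Psi(k)}{k}<\infty.
\]
Concretely, choose $N_1<N_2<\cdots$ with $\sum_{k\ge N_j}\Psi(k)/k<2^{-j}$ and let $\Phi=j/2$ on $[N_j,N_{j+1})$. Then the tail contributions sum to at most $\sum_j j2^{-j}<\infty$. Set $\tilde\Psi=\Phi\Psi$, so that $\Psi=\tilde\Psi/\Phi$ and $\sum_k\tilde\Psi(k)/k<\infty$.

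\textbf{Step 2 (full measure of $\mathcal{A}_1(\tilde\Psi)$).} We need to show that almost every $\zeta$ satisfies $\liminf_q q\Vert q\zeta\Vert/\tilde\Psi(q)>0$. Monotonicity is not assumed, so Khintchine's theorem cannot simply be quoted. Instead we use the elementary Borel--Cantelli bound, for which monotonicity is unnecessary.

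For fixed $c>0$, the set of $\zeta\in[0,1)$ with $\Vert q\zeta\Vert<c\tilde\Psi(q)/q$ has measure at most $2c\tilde\Psi(q)/q$. These measures are summable, so for a.e.\ $\zeta$ only finitely many $q$ satisfy $\Vert q\zeta\Vert<c\tilde\Psi(q)/q$.

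This alone does not give a positive liminf, because $c$ is fixed. The fix is to apply Borel--Cantelli to $\Phi\tilde\Psi$ in place of $\tilde\Psi$. To make this possible, rerun Step 1 with the stronger requirement $\sum_k\Phi(k)^2\Psi(k)/k<\infty$; this works by the same construction with a slower-growing $\Phi$. Borel--Cantelli then gives, for a.e.\ $\zeta$ and all large $q$,
\[
q\Vert q\zeta\Vert\ge \Phi(q)\tilde\Psi(q)\ge\tilde\Psi(q),
\]
since $\Phi\ge1$ eventually. Hence the liminf is at least $1>0$. Periodicity extends this from $[0,1)$ to all of $\mathbb{R}$, and we take $\Omega=\mathcal{A}_1(\tilde\Psi)$.

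\textbf{Step 3 (conclusion).} Theorem~\ref{2t} with $m=2$, the function $\tilde\Psi$ and this $\Phi$ gives a dense $G_\delta$ set $\mathcal{G}=\mathcal{G}(\Phi)$ with
\[
\mathcal{B}_2(\Psi)=\mathcal{B}_2(\tilde\Psi/\Phi)\supseteq \Omega\times\mathcal{G}.
\]
The coordinate order is immaterial because the quantity in $\mathcal{B}_2$ is symmetric in $\xi$ and $\zeta$. This gives the displayed inequality for $\xi\in\mathcal{G}$ and $\zeta\in\Omega$.

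For the final claim, note that $\mathcal{G}\ne\emptyset$. So the projection of $\mathcal{B}_2(\Psi)$ onto the $\zeta$-axis contains $\Omega$, which has full measure. By symmetry the same holds for the other axis.

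\textbf{Main obstacle.} The only delicate point is Step 1: constructing $\Phi\to\infty$ while keeping the series convergent, which the block construction above handles. There is also the bookkeeping of using a doubled factor $\Phi$ so that the liminf is strictly positive and not merely that exceptional $q$ are finite for each fixed constant. The rest is a direct application of Theorem~\ref{2t}.
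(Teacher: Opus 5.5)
Your argument is correct and is essentially the paper's proof: you produce a slowly growing multiplier $\varphi$ with $\sum_k \Psi(k)\varphi(k)/k<\infty$, apply Theorem~\ref{2t} with $m=2$ to $\Psi\varphi$ and $\Phi=\varphi$, and obtain full measure of $\Omega=\mathcal{A}_1(\Psi\varphi)$ from the convergence (Borel--Cantelli) half of the metric theory, which needs no monotonicity. The one inaccuracy is your remark in Step~2 that a fixed $c$ does not give a positive liminf: it does, since $q\Vert q\zeta\Vert\ge c\tilde\Psi(q)$ for all large $q$ already means $\liminf_q q\Vert q\zeta\Vert/\tilde\Psi(q)\ge c>0$, so the rerun with $\Phi^2$ is harmless but unnecessary.
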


\begin{proof}
We use the following elementary observation.

    \begin{proposition}
        Let $\sum a_k<\infty$ be a convergent series
        of real numbers $a_k>0$. Then there exists a sequence $b_k\to\infty$ such that 
        $\sum a_k b_k<\infty$.
    \end{proposition}

    We leave the details of the proof to the reader.
    Starting with $\Psi$, by the proposition, we find some $\varphi\to\infty$ so that 
    \begin{equation} \label{eq:125}
        \sum_{k=1}^{\infty} \Psi(k) \varphi(k)/k<\infty
    \end{equation}
    by choosing $a_k=\Psi(k)/k$ and defining $\varphi$ via $\varphi(k):=b_k$. We apply 
     the case $m=2$ of Theorem~\ref{2t} to
     the function $\Psi\varphi$. Then we know
     for any function $\Phi\to\infty$ we have
      \[
     \mathcal{A}_{1}(\Psi\varphi)\times \mathcal{G}\subseteq
    \mathcal{B}_2(\Psi\varphi/\Phi).
     \]
    Choosing $\Phi=\varphi$
    the right hand side 
    becomes $\mathcal{B}_2(\Psi)$.
    The set $\Omega=\mathcal{A}_1(\Psi\varphi)$ has full measure by
    \eqref{eq:125} and Jarn\'ik Besicovich Theorem (we only need the straightforward convergence part, which requires no monotonicity on $\Psi\varphi$), so the claim follows.
\end{proof}


It is possible to generalize the claim 
to arbitrary $m$, then according to Gallagher's Theorem condition \eqref{eq:converges}
has to be replaced by $\sum_{k=1}^{\infty} \Psi(k) (\log k)^{m-1}/k<\infty$. We state two more remarks on Corollary~\ref{lastc}.

\begin{remark}
    In the final metrical projection claim, it follows in fact easily from  $\mathcal{B}_2(\Psi)\supseteq \Omega\times \mathcal{G}$
    and symmetry of $\mathcal{B}_2(\Psi)$ in $\xi,\zeta$ that its projection onto any line in $\mathbb{R}^2$ has full measure (with respect to the natural Haar measure on the line).
\end{remark}

\begin{remark}
    The final metrical projection claim
    follows in many interesting instances alternatively 
    from~\cite[Theorem 1.9]{bfk}.
    Indeed if $\Psi(k)=o(1/\log k)$,
    then $\mathcal{B}_2(\Psi)\subseteq \mathbb{R}^2$ has full $2$-dimensional Lebesgue measure by~\cite[Remark 1.10, (ii)]{bfk}, implying that the projections onto axes (more generally onto any line) have full $1$-dimensional Lebesgue measure.
    This also contains certain $\Psi$ with divergent series $\sum \Psi(k)/k$ not covered by Theorem~\ref{2t}.
    On the other hand, it is not hard to construct (even decreasing) functions $\Psi$ for which the assumption
    \eqref{eq:converges} of Theorem~\ref{2t} holds but which do not satisfy $\Psi(k)=o(1/\log k)$.  Moreover we emphasize that a strength of 
    the more general claim $\mathcal{B}_2(\Psi)\supseteq \Omega\times \mathcal{G}$ is that the set $\mathcal{G}$ is independent from $\zeta\in \Omega$; the existence of a non-empty $\mathcal{G}$ with this property for some full measure $\Omega\subseteq \mathbb{R}$ does not follow from full measure of $\mathcal{B}_2(\Psi)\subseteq \mathbb{R}^2$.
\end{remark}

    \section{$S$-arithmetic ULC}

A uniform version 
of the well-known $p$-adic Littlewood conjecture would naturally ask if
\[
\lim_{Q\to\infty} Q  \min_{q\in \mathbb{Z},0<q\le Q} \Vert q\xi\Vert \cdot \vert q \vert_{p}
   =0
\]
for every real number $\xi$. We show that
this is false as well. We further consider
$S$ arithmetic settings, where we allow also infinite sets $S$ of primes. We show
that for any reasonable set $S$,
a strengthening of this claim in the spirit 
of~\S~\ref{s1.2}, i.e.
with an additional factor that tends arbitrary slowly to infinity, also does not hold. 
On the other hand, the classical $S$ arithmetic version is false when $S^c$ is only a singleton.

We recall the definition: 
Let $S$ be set of primes. For $q\ne 0$ 
an integer
let $|q|_S= \prod_{p\in S} |q|_{p}$ where $|q|_p= p^{-t}$ if $p^t\Vert q$ is the $p$-adic evaluation of $q$. Note this is also defined for infinite $S$ as $|q|_p=1$ for all but finitely many $p\in S$. When we write $S^c$ below,
we always mean the complement is taken within the set of primes.
We prove the following dichotomy result.

\begin{theorem} \label{t2}
Let $S$ be any proper subset of primes (possibly infinite). Then
\begin{itemize}
\item[(i)] In the $p$-adic setting, i.e. if $|S|=1$, the set
\[
 \Gamma:= \left\{ \xi\in\mathbb{R}:\;\limsup_{Q\to\infty} Q \min_{q\in \mathbb{Z},0<q\le Q} \Vert q\xi\Vert \cdot \vert q \vert_{S}
     = 1\right\}
\]
is dense $G_{\delta}$ and has Hausdorff dimension at least $1/2$, full packing dimension and $\Gamma+\Gamma=\Gamma\cdot \Gamma=\mathbb{R}$. 
    \item[(ii)] 
    For $\Phi(t)\to \infty$ any function tending to infinity as $t\to\infty$, the set
    \[
    \tilde{\Gamma}=\left\{ \xi\in\mathbb{R}:\;\limsup_{Q\to\infty} Q \Phi(Q) \min_{q\in \mathbb{Z},0<q\le Q} \Vert q\xi\Vert \cdot \vert q \vert_{S}
     = \infty\right\}
    \]
    is a dense $G_{\delta}$ set, it has full packing dimension
    and satisfies $\tilde{\Gamma}+\tilde{\Gamma}=\tilde{\Gamma}\cdot \tilde{\Gamma}=\mathbb{R}$.
    \item[(iii)] If $S$ has singleton complement, i.e. $|S^c|=1$,
    then for any real number $\xi$ we have
     \[
    \lim_{Q\to\infty} Q \min_{q\in \mathbb{Z},0<q\le Q} \Vert q\xi\Vert \cdot \vert q \vert_{S}
     = 0.
    \]
\end{itemize}
\end{theorem}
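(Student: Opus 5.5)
The plan is to treat the three parts separately. Parts (i) and (ii) will come from explicit Liouville-type constructions followed by the standard topological upgrades. Part (iii) will come from a Dirichlet-type argument along multiples of large $S$-numbers.

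\emph{Parts (i) and (ii).} I would build $\xi$ whose convergent denominators $q_n$ have trivial $S$-part, i.e. $|q_n|_S=1$, and which are followed by a huge gap.

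\textbf{Construction.} In (ii) the complement $S^c$ is nonempty, so I fix a prime $r\notin S$. I then take
\[
\xi=\sum_n r^{-e_n}
\]
with $e_n$ increasing extremely fast. For $Q_n=r^{e_n}$ we have $|Q_n|_S=1$ and $\Vert Q_n\xi\Vert\asymp r^{e_n-e_{n+1}}$.

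\textbf{Key estimate.} For $Q_n\le Q<Q_{n+1}$ I would check that
\[
\min_{0<q\le Q}\Vert q\xi\Vert\,|q|_S\gg \min\Bigl(Q_{n+1}^{-1},\;\min_{q\le Q,\,Q_n\nmid q}\Vert q\xi\Vert\Bigr).
\]
Two cases give this.
\begin{itemize}
\item Multiples $q=jQ_n$ contribute $\Vert q\xi\Vert\,|q|_S\ge j|j|_S/Q_{n+1}\ge Q_{n+1}^{-1}$, since $j|j|_S\ge 1$.
\item Non-multiples satisfy $\Vert q\xi\Vert\ge 1/(2Q_n)$, because $\xi$ is extremely close to a rational with denominator $Q_n$.
\end{itemize}
Taking $Q$ close to $Q_{n+1}$, with $e_{n+1}$ chosen so that $\Phi(Q_{n+1})\to\infty$ fast enough, gives $Q\Phi(Q)\min(\cdots)\to\infty$.

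\textbf{Part (i).} When $|S|=1$, $S=\{p\}$, I do the same with continued fractions. I prescribe partial quotients so that every $q_n$ is coprime to $p$, which is possible since $q_{n+1}=a_{n+1}q_n+q_{n-1}$ and $a_{n+1}$ can be chosen in a suitable residue class. I also let $a_{n+1}\to\infty$.
\begin{itemize}
\item Lower bound: with $Q=q_{n+1}-1$ the minimum is $\approx\Vert q_n\xi\Vert\approx 1/q_{n+1}$, which yields $\limsup\ge 1$.
\item Upper bound: $\limsup\le1$ follows from Dirichlet, since $|q|_p\le 1$.
\item Dimension: restricting to $a_{n+1}\asymp q_n$ in the admissible residue classes gives a Cantor set. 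A Jarník-type mass distribution estimate (irrationality exponent $3$) gives it Hausdorff dimension $\ge 1/2$.
\end{itemize}

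\textbf{Topological upgrades.} Density follows by prescribing arbitrary initial digits. The $G_\delta$ property follows by writing $\Gamma$, resp. $\tilde\Gamma$, as
\[
\bigcap_N\bigcup_{Q>N}\Bigl\{\xi:\ \text{the defining quantity at }Q\text{ exceeds }1-1/N\ \text{(resp. }N)\Bigr\}.
\]
Each inner set is open up to the rational points, which I would treat as in Lemma~\ref{gdelta}. For (i), the matching upper bound $\le 1$ holds everywhere, so the "$=1$" condition is exactly this $G_\delta$ condition. Proposition~\ref{erpro} then gives the sumset and product set claims, and Lemma~\ref{icha} gives full packing dimension.

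\emph{Part (iii).} Let $S^c=\{p\}$, so $|q|_S=p^{v_p(q)}/q$ for $q>0$. Rational $\xi$ are trivial, so assume $\xi$ is irrational.
\begin{itemize}
\item Given $Q$, pick $M$ a product of primes in $S$ (so $p\nmid M$) with $M\le Q/T$.
\item Apply Dirichlet to $M\xi$: there is $k\le T$ with $\Vert kM\xi\Vert\le 1/T$.
\item For $q=kM\le Q$, writing $k=p^a u$ with $p\nmid u$, this gives
\[
Q\Vert q\xi\Vert\,|q|_S\le \frac{Q}{T}\cdot\frac{1}{uM}\approx \frac1u .
\]
\end{itemize}
So it suffices to force $u\to\infty$.

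\textbf{Main obstacle.} The hard step is ruling out that the Dirichlet solution $k$ is essentially a pure power of $p$. I would first try two (or several) coprime choices $M_1,M_2$ of comparable size, both products of $S$-primes; there are many such, since $S$ is cofinite. If both choices produced $u$ bounded, then $p^{a_1}u_1M_1\xi$ and $p^{a_2}u_2M_2\xi$ would both be very close to integers. Combining these relations, I would aim to get a nonzero integer combination $q'\le Q$ whose $S$-part is large, contradicting the irrationality of $\xi$ for $Q$ large. If this combinatorial step resists, the fallback is to apply Minkowski's theorem to the lattice $\{q: p\nmid q\}$ directly, to produce a $p$-coprime small $k$.
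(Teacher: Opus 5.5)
\textbf{Part (i) has a genuine gap.} The claim that for $Q=q_{n+1}-1$ the minimum is $\approx\Vert q_n\xi\Vert$ is false in general. Since $|q|_p\le 1$, the best-approximation property $\Vert q\xi\Vert\ge\Vert q_n\xi\Vert$ for $q<q_{n+1}$ gives no lower bound for the product $\Vert q\xi\Vert\,|q|_p$. The danger is $q$ divisible by a large power of $p$, and making the $q_n$ prime to $p$ does nothing about these.

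Concretely, let $p^j$ be the largest power of $p$ with $p^j\le Q$, and put $M=\lfloor Q/p^j\rfloor\le p-1$. Dirichlet applied to $p^j\xi$ gives $1\le m\le M$ with $\Vert mp^j\xi\Vert\le 1/(M+1)$. So $q=mp^j\le Q$ satisfies $Q\Vert q\xi\Vert\,|q|_p\le Q/(p^j(M+1))<1$. This bound is close to $1$ only if $Q/p^j$ lies just below an integer; for $p=2$ it is $Q/2^{j+1}$, which can be anywhere in $[\frac12,1)$. Your residue-class choice of $a_{n+1}$ controls none of this.

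To reach the value $1$, all these Dirichlet bounds must be sharp simultaneously. The paper achieves this with denominators that are \emph{powers} of $p$: take $|p^a\xi-u|<\beta p^{-a}$ with $p\nmid u$, and $Q=p^a-1$. For $s=mp^v\le Q$ with $p\nmid m$ (so $v<a$), the nonzero number $u/p^a-r/s$ has denominator dividing $mp^a$. Hence $|s\xi-r|\ge p^{v-a}-\beta s p^{-2a}$, and so $|s|_p|s\xi-r|\ge (1-\beta)p^{-a}$. The bound $\dim_H\Gamma\ge\frac12$ then comes from Borosh--Fraenkel for denominators in $\{p^a\}$. Your exponent-$3$ Cantor set is never shown to lie in $\Gamma$.

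\textbf{Part (iii) is not proved.} You correctly reduce to forcing the $p$-free part $u$ of the Dirichlet multiplier to grow, but that is the whole difficulty (think of $\xi$ whose convergent denominators are powers of $p$). Neither suggested route is carried out, and the Minkowski fallback cannot work as stated, since $\{q:\ p\nmid q\}$ is not a lattice.

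The missing idea, which the paper exploits via linear combinations of consecutive convergent denominators, is that $\gcd(s_{k-1},s_k)=1$ and that $|q|_S=1/q$ whenever $p\nmid q$. Let $s_k\le Q<s_{k+1}$.
If $p\nmid s_k$, then $Q\Vert s_k\xi\Vert\,|s_k|_S<1/s_k$.
If $p\mid s_k$ and $Q<2s_k$, take $q=s_{k-1}$.
If $p\mid s_k$ and $Q\ge 2s_k$, take $q=ys_k+s_{k-1}\in(Q/2,Q]$; it is prime to $p$ and has $\Vert q\xi\Vert<2/s_k$.
In all cases $Q\Vert q\xi\Vert\,|q|_S\le 4/s_{k-1}\to 0$, and no auxiliary $M$ is needed.

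\textbf{Part (ii) has the right idea (the same as the paper's), but the calibration is wrong.} Your key estimate drops $|q|_S$ for non-multiples of $Q_n$, and that factor can be as small as $1/q$. Moreover, $\Vert q\xi\Vert\ge 1/(2Q_n)$ holds only for $q\lesssim Q_{n+1}/Q_n$. Writing $\sum_{k\le n+1}r^{-e_k}=A/Q_{n+1}$, the $q<Q_{n+1}$ with $qA\equiv 1\pmod{Q_{n+1}}$ is a non-multiple of $Q_n$ with $\Vert q\xi\Vert\approx 1/Q_{n+1}$. Multiples $jQ_n$ also wrap around as $j\to Q_{n+1}/Q_n$. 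So ``$Q$ close to $Q_{n+1}$'' breaks both cases.

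The fix is to take $Q\asymp Q_{n+1}/Q_n$, with $e_{n+1}$ so large that $\Phi(Q)\ge nQ_n$. Then multiples contribute $\gg\Phi(Q)$ and non-multiples contribute $\ge\Phi(Q)/(2Q_n)$. This is exactly the paper's choice $Q_k=\Phi^{-1}(2^{a_k})$ with $|2^{a_k}\xi-p_k|\asymp Q_k^{-1}$, combined with Proposition~\ref{dieprop}.
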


\begin{remark}
    Presumably (iii) can be generalized, at least to the setting of $S^c$ being finite. However, the situation becomes more intricate and we do not put effort to prove it rigorously. On the other hand, it is very unclear to us if (i) holds for $|S|=2$.
\end{remark}

Note 
that the non-uniform version (i.e. the classical $S$-arithmetic Littlewood problem) of claim (ii)
is false for certain $\Phi(t)\to\infty$ as soon as $S$ has at least two elements. Indeed by Theorem 1.8 of~\cite{ven} the claim is false, so the lower limit is zero,
when $\Phi(Q)=(\log \log\log Q)^{\kappa}$ for some positive small $\kappa$.

We further remark that our method cannot provide full or even positive Hausdorff dimension in (ii) for functions $\Phi$ tending to $\infty$ at some rate of interest (where the claim is not superseded by well-known metrical results for classical non-uniform settings), 
since it then requires $\xi$ to be a Liouville number
which form a set of Hausdorff dimension zero.

As for the claims in~\S\ref{s1.2},
the proof of Theorem~\ref{t2} is considerably easier than for Theorem~\ref{T01}, we give a brief upshot:
In (i) we only require $\xi$ to have many good approximations whose denominators are powers of the prime $p$ that equals $S$. Similarly,
for (ii) we just need $\xi$ to be ``Liouville enough'', however here with denominators of well approximating convergents not divisible by primes in $S$. Claim (iii)
follows from considering two consecutive
convergents to $\xi$ and studying their linear combinations, and showing that most denominators of them are not divisible by the single prime forming $S^c$.

  \section{Some related observations on sumsets}
     Motivated by the sumset result in Theorem~\ref{T01}, we investigate the sumset to
     the counterexamples to the classical Littlewood Conjecture \eqref{eq:lw}. Recall the notation $\mathcal{A}$ from~\S~\ref{intro} for this set. 
     We show that $\mathcal{A}+\mathcal{A}$ is small in metrical sense, thereby contrasting it from the set $\Theta$ in Theorem~\ref{T01}.  Write 
     \begin{equation} \label{eq:FGH}
     Bad_k= \left\{ (\xi_1,\ldots,\xi_k)\in\mathbb{R}^k:\; \liminf_{q\to\infty} q^{1/k} \max_{1\le j\le k} \Vert q\xi_j\Vert > 0\right\}
     \end{equation}
     for the set of $k$-dimensional badly approximable real vectors.

     \begin{theorem}  \label{thmlwood}
         The set
         \[
         Bad_2+\mathcal{A}\supseteq \mathcal{A}+\mathcal{A}
         \]
         has $2$-dimensional Lebesgue measure $0$. Moreover, if $\Pi:(x,y)\to x$ is the projection to the first coordinate then the sets
         \[
         Bad_1+\Pi(\mathcal{A})\supseteq \Pi(\mathcal{A}+\mathcal{A}),\qquad
          Bad_1\cdot \Pi(\mathcal{A})\supseteq \Pi(\mathcal{A})\cdot \Pi(\mathcal{A})
         \]
         have $1$-dimensional Lebesgue measure $0$.
     \end{theorem}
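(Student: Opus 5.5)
The plan is to reduce all three claims to dimension estimates through Tricot's inequality, which was already used in the proof of Lemma~\ref{icha}: for $E,F\subseteq\mathbb{R}^k$ we have $\dim_H(E+F)\le \dim_H E+\dim_P F$. The non-elementary input is the theorem of Einsiedler, Katok and Lindenstrauss that the exceptional set $\mathcal{A}$ of the Littlewood conjecture has Hausdorff dimension $0$. Since projection is Lipschitz, it follows that $\dim_H\Pi(\mathcal{A})=0$ as well. The inclusions $\mathcal{A}+\mathcal{A}\subseteq Bad_2+\mathcal{A}$ and $\Pi(\mathcal{A})\subseteq Bad_1$ are immediate, because every element of $\mathcal{A}$ has both coordinates badly approximable, as noted in \S~\ref{intro}. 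For the two-dimensional inclusion I still need to check that $\mathcal{A}\subseteq Bad_2$. If $(\xi,\zeta)\in\mathcal{A}$, then $q\Vert q\xi\Vert\,\Vert q\zeta\Vert\ge c$, and since $\Vert q\xi\Vert\,\Vert q\zeta\Vert\le \max_j\Vert q\xi_j\Vert^2$ this gives $q^{1/2}\max_j\Vert q\xi_j\Vert\ge c^{1/2}$, which is exactly the defining condition of $Bad_2$.

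The difficulty is that $Bad_k$ itself has full packing dimension, so Tricot's inequality cannot be applied to it directly. I would therefore decompose $Bad_k=\bigcup_{n\ge 1}Bad_k(1/n)$, where $Bad_k(c)$ is the set of vectors with $q^{1/k}\max_j\Vert q\xi_j\Vert\ge c$ for all $q\ge1$; this decomposition holds up to the usual adjustment of the constant for small $q$. Each $Bad_k(c)$ is closed and $\mathbb{Z}^k$-periodic, and I would show that $Bad_k(c)\cap[0,1]^k$ has upper box dimension, and hence packing dimension, strictly less than $k$.
\begin{itemize}
\item[(a)] For $k=1$, this follows because $Bad_1(c)\cap[0,1]$ is contained in the set of numbers whose partial quotients are bounded by some $N=N(c)$. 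The standard continued fraction covering argument shows that this set has upper box dimension $<1$.
\item[(b)] For general $k$, I would use that $Bad_k(c)$ is porous. Around each rational point $\mathbf{p}/q$ there is a ball of radius about $c\,q^{-1-1/k}$ that avoids $Bad_k(c)$, and by Dirichlet's theorem such balls occur at every scale inside every cube. Porous subsets of $\mathbb{R}^k$ have upper box dimension $<k$ by a standard counting of dyadic cubes.
\end{itemize}

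Granting this, Tricot gives $\dim_H\bigl(Bad_2(1/n)+\mathcal{A}\bigr)\le \dim_P Bad_2(1/n)+\dim_H\mathcal{A}<2$ for every $n$. A set of Hausdorff dimension $<2$ is Lebesgue null, so taking the countable union over $n$ shows that $Bad_2+\mathcal{A}$ is null. The same argument with $k=1$ and $\Pi(\mathcal{A})$ in place of $\mathcal{A}$ handles $Bad_1+\Pi(\mathcal{A})$. For the product set, I would remove the null point $0$ and split into the four sign combinations. On each piece, $x\mapsto\log|x|$ converts products into sums and is locally bi-Lipschitz, so it preserves Hausdorff and packing dimension. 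Hence $\dim_H\bigl(\log|Bad_1(1/n)\setminus\{0\}|+\log|\Pi(\mathcal{A})\setminus\{0\}|\bigr)<1$ on each compact piece, and exponentiating and taking countable unions gives measure zero. Bi-Lipschitz maps also preserve upper box dimension on compact pieces, so I would exhaust by compact sets away from $0$ and $\infty$.

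I expect the main obstacle to be the dimension bound $\dim_H\mathcal{A}=0$, since the paper has not stated it. I would cite Einsiedler, Katok and Lindenstrauss for it rather than prove it; without it, the approach collapses, because Hall's theorem gives $Bad+Bad=\mathbb{R}$. The secondary technical point is the uniform bound $\dim_P Bad_k(c)<k$ in (a) and (b). I would verify it carefully through porosity, making sure the porosity constant depends only on $c$ and $k$.
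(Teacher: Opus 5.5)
Your proposal is correct and follows essentially the same route as the paper: split $Bad_k$ into countably many level sets of packing dimension $<k$ (bounded partial quotients for $k=1$), apply Tricot's inequality together with $\dim_H\mathcal{A}=0$ from Einsiedler--Katok--Lindenstrauss, and conclude by $\sigma$-additivity. Your porosity argument for the level sets of $Bad_2$ and your logarithm trick for the product set fill in details the paper only calls ``analogous''; in the porosity step, add the easy case where the Dirichlet denominator $q$ is much smaller than $Q$, in which the point $\mathbf{x}$ itself lies deep inside the hole around $\mathbf{p}/q$.
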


      To the best of our knowledge, Theorem~\ref{thmlwood} has not been observed before.
     Note that it is not directly implied by the well-known Hausdorff dimension zero result for $\mathcal{A}$ from~\cite{ein}: Indeed, the set of Liouville numbers (or vectors) $\mathcal{L}_k$ via Proposition~\ref{erpro} provides a counterexample.
     We remark that the latter claim about $\Pi(\mathcal{A}+\mathcal{A})$ implies the first about $\mathcal{A}+\mathcal{A}$, however for the left hand supersets such a direct argument does not work as the relation between $\Pi(Bad_2)$ and $Bad_1$ is unclear, with difference set presumably large.

    \begin{proof}
     First notice that $\mathcal{A}\subseteq Bad_2 \cap (Bad_1\times Bad_1)$, so all stated inclusions are clear.
     Let us first show the second claim. Write 
     \[
     Bad_1= \cup_{m\ge 1} F_m
     \]
     where $F_m$ is the set of real numbers
     with partial quotients bounded above by $m$. Then
     \[
     Bad_1+\Pi(\mathcal{A})= \cup_{m\ge 1}
     (F_m+\Pi(\mathcal{A})).
     \]
     Now it is known that $\mathcal{A}$ and thus $\Pi(\mathcal{A})$ have Hausdorff dimension zero~\cite{ein}, moreover that
     $\dim_P(F_m)=\dim_H(F_m)<1$ for all $m$, 
     see~\cite[\S~2]{maul}. Thus by non-increase of Hausdorff dimension under Lipschitz maps and a result of Tricot~\cite{tricot}, each set $F_m+\Pi(\mathcal{A})$ has Hausdorff dimension
     \[
     \dim_H( F_m+\Pi(\mathcal{A}) )\le
     \dim_H( F_m\times \Pi(\mathcal{A}) )
     \le \dim_H(\Pi(\mathcal{A}))+ \dim_P(F_m)<1
     \]
     strictly less than one, hence zero Lebesgue measure. Analogously for the product set. By sigma-additivity of Lebesgue measure the second claim follows. The first claim is proved analogously, using a countable cover with level sets of $Bad_2$ in place of $F_m$.
    \end{proof}

    We only needed that $\mathcal{A}$ has Hausdorff dimension zero for the argument (for $Bad_2+\mathcal{A})$. Hence we can apply the analogous argument to the set of Liouville vectors $\mathcal{L}_k$, thereby we get the following result which again appears to be new even for $k=1$.

    \begin{theorem}
        For $k\ge 1$, the sumset $\mathcal{L}_k+Bad_k$
        has $k$-dimensional Lebesgue measure $0$.
    \end{theorem}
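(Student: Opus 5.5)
We follow the proof of Theorem~\ref{thmlwood} verbatim, with $\mathcal{L}_k$ in place of $\mathcal{A}$ and $Bad_k$ in place of $Bad_2$. The only properties of $\mathcal{A}$ used there were that $\dim_H \mathcal{A}=0$ and that the "other" set is a countable union of pieces of packing dimension strictly less than the ambient dimension. Both are available here: $\dim_H \mathcal{L}_k=0$ is the classical fact recalled before Lemma~\ref{icha}.

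First we decompose $Bad_k$ into level sets. For $c>0$ put
\[
F_c=\{(\xi_1,\ldots,\xi_k)\in\mathbb{R}^k:\; q^{1/k}\max_{1\le j\le k}\Vert q\xi_j\Vert\ge c \text{ for all } q\ge 1\}.
\]
Every element of $Bad_k$ has positive liminf and hence a positive infimum over $q\ge1$. Therefore $Bad_k=\bigcup_{m\ge1}F_{1/m}$. Each $F_c$ is closed and $\mathbb{Z}^k$-periodic, so we may cover it by countably many translates of $F_c\cap[0,1]^k$. Then
\[
\mathcal{L}_k+Bad_k=\bigcup_{m}\bigcup_{v\in\mathbb{Z}^k}\bigl(\mathcal{L}_k+v+F_{1/m}\cap[0,1]^k\bigr),
\]
and by $\sigma$-additivity it suffices to show that each $\mathcal{L}_k+(F_c\cap[0,1]^k)$ is Lebesgue null.

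As in the proof of Theorem~\ref{thmlwood}, addition is Lipschitz on bounded sets, and Tricot's inequality gives
\[
\dim_H\bigl(\mathcal{L}_k+F_c\bigr)\le \dim_H(\mathcal{L}_k\times F_c)\le \dim_H\mathcal{L}_k+\dim_P F_c=\dim_P F_c .
\]
We restrict $\mathcal{L}_k$ to bounded pieces as well; these are countably many and each still has dimension $0$. So everything reduces to showing $\dim_P(F_c\cap[0,1]^k)<k$. A set of Hausdorff dimension $<k$ has measure zero, which then finishes the proof.

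The main obstacle is this bound for $k\ge2$. For $k=1$ we can replace $F_c$ by the sets of bounded partial quotients, as in Theorem~\ref{thmlwood}, which lie in finitely many such sets with $\dim_P=\dim_H<1$ by~\cite{maul}. For general $k$ we use that $F_c\cap[0,1]^k$ is compact and porous. Fix a ball $B(x,r)$ with $r$ small and choose $q$ of size roughly $r^{-k/(k+1)}$. By Dirichlet/Minkowski there is a rational point $p/q'$ with $q'\le q$ that lies in $B(x,r)$ up to a bounded factor. The definition of $F_c$ forbids the ball of radius $c\,q'^{-1-1/k}\gg r$ (up to constants depending on $c$) around $p/q'$. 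This gives a hole of radius $\gg_c r$ inside a ball of radius $\asymp r$, so $F_c$ is uniformly porous. Uniformly porous sets in $\mathbb{R}^k$ have upper box dimension, and hence packing dimension, strictly less than $k$. This is the standard Salli/Mattila bound, and it can also be checked directly by counting subcubes at dyadic scales: each scale removes a fixed proportion of cubes. If the scale bookkeeping in the Minkowski step proves delicate, a fallback is to cite the known result that level sets of $Bad_k$ have $\dim_P<k$, the higher-dimensional analogue of~\cite{maul}, exactly as the proof of Theorem~\ref{thmlwood} does.
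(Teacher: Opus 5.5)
Your proposal is correct and follows the paper's route: the paper proves this by rerunning the argument of Theorem~\ref{thmlwood}, namely a countable decomposition of $Bad_k$ into level sets of packing dimension $<k$, Tricot's inequality combined with $\dim_H\mathcal{L}_k=0$, and $\sigma$-additivity. Your porosity argument supplies the bound $\dim_P(F_c\cap[0,1]^k)<k$, which the paper leaves implicit for $k\ge 2$; in that step, state explicitly that $x\in F_c$ forces the Dirichlet denominator to satisfy $q'\ge c^k q$, since this is what places $p/q'$ within $O_c(r)$ of $x$.
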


     This result may come as a surprise in view of
     \begin{equation} \label{eq:ERDE}
         \mathcal{L}_k+\mathcal{L}_k=
     \mathbb{R}^k=Bad_k+Bad_k, \qquad k\ge 1. 
     \end{equation}
      The left identity of 
    \eqref{eq:ERDE}
     follows from $\mathcal{L}_k$ being dense $G_{
     \delta}$ and Proposition~\ref{erpro}.
       The right identity in \eqref{eq:ERDE} follows from the winning property of $Bad_k$ (and consequently any set $\mathbf{x}-Bad_k$ for $\mathbf{x}\in\mathbb{R}^k$ and countable intersections of them), see~\cite{schmidt}.

    \section{Proof of Theorem~\ref{T01}: The main lemma}  \label{below}

    Theorem~\ref{T01} will employ Lemma~\ref{lemur} below.
  We introduce some notation.
    Recall $F_m$ is the set of real numbers $x$ in $(0,1)$ that when written
    as continued fraction $x=[a_1,a_2,\ldots]$ satisfy $a_i\le m$ for all partial quotients. Denote $\{x\}=x-\lfloor x\rfloor$ the fractional part of $x$. Let us further for $\tau>1$ write $a\asymp_{!\tau} b$ if $1/\tau<a/b< \tau$,
   that is $a,b$ differ by a factor less than $\tau$.

    \begin{lemma}  \label{lemur}
        There exist $\tau>1$ and $m\in\mathbb{N}$ such that
        for $I, J$ any subintervals of $(0.1,0.9)$ of positive length, there exist (arbitrarily large) positive integers $p,q,r,s$ such that
        \begin{equation}  \label{eq:C1}  \tag{C1}
        q \asymp_{!\tau} s    
        \end{equation}
        and
          \begin{equation}  \label{eq:C2}  \tag{C2}
        \frac{p}{q}\in I, \qquad \frac{r}{s}\in J
        \end{equation}
        and $ps/q$ and $qr/s$ are reduced rationals satisfying
          \begin{equation}  \label{eq:C3}  \tag{C3}
        \left\{  \frac{ps}{q} \right\} \in F_m, \qquad
        \left\{  \frac{qr}{s} \right\} \in F_m.
        \end{equation}
        We may choose $\tau$ arbitrarily close to $1$ and $m=50$.
    \end{lemma}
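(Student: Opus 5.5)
Throughout, we take $\gcd(q,s)=1$. Then the condition that $ps/q$ and $qr/s$ be reduced amounts to $\gcd(p,q)=\gcd(r,s)=1$. Their fractional parts are $\{ps/q\}=t/q$ with $t\equiv ps \pmod q$, and $\{qr/s\}=u/s$ with $u\equiv qr\pmod s$.

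We will choose $q,s$ first, and then $p,r$. The idea is to pick $t\in(0,q)$ with $t/q\in F_m$ and $\gcd(t,q)=1$, and then set $p\equiv t\bar s \pmod q$, where $\bar s$ is the inverse of $s$ modulo $q$. Similarly we pick $u$ with $u/s\in F_m$ and set $r\equiv u\bar q\pmod s$. Condition \eqref{eq:C3} then holds automatically. Since $p$ is only determined modulo $q$, it is the least residue, and the remaining requirement is \eqref{eq:C2}: the residue $t\bar s \bmod q$ must lie in $qI$, and $u\bar q\bmod s$ must lie in $sJ$.

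Let $T_q=\{t: t/q\in F_m,\ \gcd(t,q)=1\}$. Here we use Bourgain--Kontorovich \cite{bko}. For $m=50$, the set of denominators $q\le N$ that admit some $t$ with $t/q\in F_m$ has density one. More quantitatively, I would like $|T_q|\ge q^{2\delta_m-1-\epsilon}$ for most $q$, where $\delta_m$ is the Hausdorff dimension of $F_m$, which is close to $1$ for $m=50$. This lower bound should follow from their counting of $\sum_{q\le N}|T_q|\approx N^{2\delta_m}$ together with the upper bound $|T_q|\ll q^{2\delta_m-1+\epsilon}$, or directly from their circle-method output; if needed I would pass to prime $q$ via a density-one set. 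Taking $q$ prime is convenient: then $\mathbb{Z}/q$ is a field and the multiplication map $t\mapsto t\bar s$ is a bijection on residues.

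The problem is therefore to find $s\asymp_{!\tau} q$ for which the dilate $\bar s\,T_q$ meets the interval $qI$. This is where the product-set estimates over $\mathbb{F}_q$ enter. Let $S'=\{s\in[q/\tau,q\tau]\text{ prime}: T_s\ \text{large}\}$; by Bourgain--Kontorovich this set has size $\gg q/\log q$. Consider the product set $T_q\cdot \overline{S'}\subseteq \mathbb{F}_q$. Bound the number of solutions of $t\bar s\in qI$ with $t\in T_q$, $s\in S'$ by Erd\H{o}s--Tur\'an, which reduces matters to the exponential sums $\sum_{t\in T_q}\sum_{s\in S'} e(ht\bar s/q)$. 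These bilinear Kloosterman-type sums can be bounded by Cauchy--Schwarz and completion, giving roughly $\sqrt{|T_q||S'|q}$, which is a Vinogradov-type bilinear estimate. Since $|T_q||S'|\ge q^{2\delta_m-\epsilon}\cdot q^{1-\epsilon}$ exceeds $q^{1+\epsilon'}$ once $\delta_m>1/2$, the main term $|I||T_q||S'|$ dominates. We conclude that for most $s\in S'$ the residue $p$ lands in $qI$.

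We must impose the symmetric condition at the same time: the residue $u\bar q \bmod s$ must lie in $sJ$ for some $u\in T_s$. Since $q$ is fixed and we count pairs, I would apply the same equidistribution argument with the roles swapped, now with $s$ ranging over $S'$. For each $s$ we need $\bar q\, T_s \bmod s$ to meet $sJ$, and because $q$ is fixed here the variation comes only from $s$. To handle this I would instead count triples $(s,t,u)$ and show that the number of $s\in S'$ failing either condition is $o(|S'|)$. For the second condition, bound the exceptional set by an averaged Erd\H{o}s--Tur\'an over $s$, using completeness of the sum over $u\in\mathbb{F}_s$ together with large $|T_s|$: the map $u\mapsto u\bar q$ is a bijection, so $\bar qT_s$ is just a dilate of a set of size about $s^{2\delta_m-1}$. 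Its equidistribution follows if $T_s$ itself has small Fourier coefficients, which requires $T_s$ to be Fourier-uniform. I expect this to be the main obstacle, and the first tool I would try is the Bourgain--Kontorovich minor-arc bounds.

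Gcd conditions and arbitrarily large $p,q,r,s$ are automatic from the choice of large primes. Finally, \eqref{eq:C1} with $\tau$ arbitrarily close to $1$ holds by the choice of $S'$, and $m=50$ is simply a value for which \cite{bko} applies.
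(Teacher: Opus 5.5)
\paragraph{Assessment.} Your treatment of the first half of \eqref{eq:C3} is essentially the paper's Lemma~\ref{dadada}. For a fixed prime $q$ with $|T_q|$ large you need $\bar s\,T_q\cap qI\neq\emptyset$ in $\mathbb{F}_q$, and the bound $|\sum_{x\in X}\sum_{y\in Y}e(hxy/q)|\le\sqrt{q|X||Y|}$ combined with Erd\H{o}s--Tur\'an gives this. Two small corrections:
\begin{itemize}
\item Counting solutions over all of $S'$ only shows that \emph{some} $s$ works. To get ``most $s$'' you must run the argument with $Y$ equal to the inverses of the \emph{failing} $s$, where the solution count is zero. This gives an exceptional set of size $\ll_I q(\log q)^2/|T_q|$.
\item An average $\sum_{q\le N}|T_q|\approx N^{2\delta_m}$ plus a pointwise upper bound yields the lower bound on $|T_q|$ only for about $N^{1-\epsilon}$ moduli, not for almost all primes. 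You need the quantitative Bourgain--Kontorovich statement directly, as the paper does.
\end{itemize}

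\paragraph{The gap.} It lies in the second half of \eqref{eq:C3}. You keep $q$ fixed and ask that $\bar q\,T_s$ meet $sJ$ in $\mathbb{F}_s$ for most $s$. Now both the modulus and the set $T_s$ vary with $s$, so there is no single bilinear form to bound.

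The route you propose cannot work as stated, because $T_s$ is \emph{not} Fourier-uniform. Since $u/s\in F_m$ forces $u>s/(m+1)$, the points $T_s/s$ have discrepancy at least $1/(m+1)$. Erd\H{o}s--Tur\'an then gives a frequency $h\ll_m 1$ with $|\sum_{u\in T_s}e(hu/s)|\gg_m|T_s|$.

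Whether a given $s$ fails depends on the arithmetic of $\bar q \bmod s$, not on $T_s$ alone. For example, if $s=q+2$ is prime then $\bar q\equiv-\bar 2\pmod s$. In that case $\{u\bar q/s\}$ lies in $\{1-x/2,\,(1-x)/2\}$ with $x=u/s\in F_m$, which misses any $J\subseteq(1/2,0.9)$ inside $1-G/2$ for a gap $G$ of $F_m$ near $1/2$. Controlling how often $h\bar q \bmod s$ hits the large Fourier coefficients of $T_s$ as $s$ varies is a new problem, and your sketch does not address it.

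\paragraph{The missing idea.} Stop fixing $q$ and use the symmetry of the two conditions.
\begin{itemize}
\item For each good prime $q\in[\gamma T,T]$, all but $O(T^{1-\eta})$ primes $s$ satisfy the first condition. This is the bilinear bound in $\mathbb{F}_q$ with dilation $\bar s$.
\item For each good prime $s$, all but $O(T^{1-\eta})$ primes $q$ satisfy the second condition. This is the \emph{same} bilinear bound, now in $\mathbb{F}_s$ with the fixed set $T_s$ and varying dilation $q$; it is Lemma~\ref{dadada} with $e^{-1}=q$.
\end{itemize}
Among the $\asymp (T/\log T)^2$ pairs of good primes, only $O(T^{2-\eta})$ fail either condition, so almost all pairs satisfy both. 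In particular, your target statement does hold for most $q$, but pair counting is how one sees it.

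The paper phrases this as a pigeonhole argument. It picks $\tilde s$ that satisfies the first condition for at least $(1-\gamma-o(1))T/\log T$ primes $q$. It then discards the $O(\tilde s^{1-\eta})$ values of $q$ that fail the second condition in $\mathbb{F}_{\tilde s}$.
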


   The conditions \eqref{eq:C1} resp. \eqref{eq:C3} become weaker as we increase $\tau$ resp. $m$. 
   Note that the conditions \eqref{eq:C1}, \eqref{eq:C2} imply
   \[
        p\asymp q \asymp r \asymp s
        \]
   with implied constant some modification of $\tau$. The choice of $(0.1,0.9)$ is not of major significance, we should just stay away from $0$.
   The most critical condition is clearly \eqref{eq:C3}. 

   In~\S~\ref{lemproof}, we prove the lemma.
   In \S~\ref{lise} we derive Theorem~\ref{T01} from Lemma~\ref{lemur} via
   taking $\tau$ close enough to $1$ and $m=50$,
   %
%
%
%
    the smallest value we can provide  unconditionally, derived via~\cite[Remark 1.20]{bko}.
   This is far off the optimal expected value $m=2$, which would lead by an analogous argument to some optimal constant $0.0692\ldots>1/15$ in Theorem~\ref{T01} feasible by our method.
    If true, this
would imply
    strict inequality and an effective gap
    compared to the largest possible classical Littlewood constant. Indeed then
    \[
     \sup_{(\xi,\zeta)\in\mathbb{R}^2 }\;\limsup_{Q\to\infty} Q \min_{q\in \mathbb{Z},0<q\le Q} \Vert q\xi\Vert\cdot \Vert q \zeta\Vert >
     \frac{1}{15}>\frac{1}{19} \ge \sup_{(\xi,\zeta)\in\mathbb{R}^2 }\;\liminf_{Q\to\infty} Q \min_{q\in \mathbb{Z},0<q\le Q} \Vert q\xi\Vert\cdot \Vert q \zeta\Vert,
    \]
     where the most right estimate is due to Badziahin~\cite{badz}.  Clearly such a gap is a trivial consequence of Theorem~\ref{T01} if the Littlewood Conjecture \eqref{eq:lw} is true.
    An intermediate lower bound in the uniform problem larger than $0.0414>1/25$ could be derived from $m=5$, possibly more in reach for a proof as
    suggested
    by work of Frolenkov, Khan~\cite[Theorem~2.1]{frokan} following up on~\cite{bko}. However,
    it appears that at least significantly more work is required for a rigorous proof that $m=5$ suffices. Generally given we can confirm Lemma~\ref{lemur} for some $m$, our resulting bound is larger than $(m+2)^{-1}/4$, where the factor $1/4$ can certainly be improved
    without major effort.

\section{Proof of Lemma \ref{lemur} } \label{lemproof}

%
We need the following
counting lemma on the cardinality 
of product sets in finite fields
whose proof is based on Erd\H{o}s Turan inequality.

\begin{lemma}  \label{dadada}
    Let 
    $\sigma\in (0,1), d>0$ be fixed. For $N$ a large
    prime,
    let $\mathscr{M}\subseteq \{0,1,\ldots,N-1\}$ be any subset of residue classes modulo $N$ of cardinality $M=|\mathscr{M}|\ge N^{\sigma}$.
    Let $\mathcal{H}=\{L+1,\ldots,L+H\}$ be some subinterval of 
    $\{0,1,\ldots,N-1\}$ of length $H\ge dN$. 
    Then for some absolute $\eta>0$ independent of $N$, the set $\mathscr{E}$ consisting of
    $e\in \{0,1,\ldots,N-1\}$ 
    for which the set $e^{-1}\cdot \mathcal{H}=\{e^{-1}h\bmod N:h\in \mathcal{H}\}$
    has empty intersection with $\mathscr{M}$ is of cardinality
    \[
    |\mathscr{E}|=:E\ll N^{1-\eta}, \qquad \eta>0.
    \]
    
\end{lemma}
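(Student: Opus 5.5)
We want to bound the number of $e$ for which $e^{-1}\mathcal{H}\cap\mathscr{M}=\emptyset$. We may ignore $e=0$, which contributes at most one element. The condition says that no $m\in\mathscr{M}$ has $em\bmod N\in\mathcal{H}$. So the plan is to show that for all but few $e$, the dilated set $e\cdot\mathscr{M}=\{em\bmod N: m\in\mathscr{M}\}$ is equidistributed enough in $\mathbb{Z}/N\mathbb{Z}$ to meet the interval $\mathcal{H}$, which has proportion at least $d$.

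\textbf{Step 1: reduction via Erd\H{o}s--Tur\'an.} Apply the Erd\H{o}s--Tur\'an inequality to the $M$ points $\{em/N\}\in[0,1)$, $m\in\mathscr{M}$, with a truncation parameter $K$. The interval $\mathcal{H}/N$ has length $H/N\ge d$. If it contains no point, the discrepancy is at least $d$, which gives
\[
d\le \frac{C}{K}+C\sum_{k=1}^{K}\frac{1}{k}\,\frac{1}{M}\Bigl|\sum_{m\in\mathscr{M}} e_N(kem)\Bigr|,
\]
where $e_N(x)=\exp(2\pi i x/N)$. Choose $K$ to be a large constant depending on $d$, so that $C/K\le d/2$. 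Then for each $e\in\mathscr{E}$ there is some $k\le K$ with
\[
\Bigl|\sum_{m\in\mathscr{M}} e_N(kem)\Bigr|\ge c(d)\,M.
\]

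\textbf{Step 2: second-moment count.} For a fixed $k\not\equiv0$, the map $e\mapsto ke$ is a bijection of $(\mathbb{Z}/N\mathbb{Z})^*$. By Parseval,
\[
\sum_{t\bmod N}\Bigl|\sum_{m\in\mathscr{M}} e_N(tm)\Bigr|^2=NM.
\]
Hence the number of $t$ with $|\widehat{1_{\mathscr{M}}}(t)|\ge cM$ is at most $NM/(c^2M^2)=N/(c^2M)\le c^{-2}N^{1-\sigma}$. Summing over the $K$ values of $k$ gives
\[
E\le 1+K c^{-2}N^{1-\sigma}\ll_{d} N^{1-\sigma}.
\]
So $\eta=\sigma$ works, or any smaller positive value. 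The implied constant depends on $d$ and $\sigma$, but $\eta$ does not depend on $N$. If one insists on a truly absolute $\eta$ as the statement says, one can take the exponent $\sigma$, which is fixed in advance.

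\textbf{Main obstacle.} The delicate point is the constants in Erd\H{o}s--Tur\'an, so that $K$ depends only on $d$, together with the requirement $k<N$, so that $k$ is invertible mod $N$; this holds for large prime $N$. If one wanted more than this, for example a Vinogradov-type bound for the individual sums when $\mathscr{M}$ has structure, that would require extra input. The lemma as stated, however, appears to need only the $L^2$ averaging over $e$, which is why no assumption on $\mathscr{M}$ beyond its size is required.
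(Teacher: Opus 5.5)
Your argument is correct. It reaches the bound by a somewhat different organization of the averaging over $e$ than the paper uses.

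\textbf{The paper's route.} The paper treats all bad $e$ at once. It applies Erd\H{o}s--Tur\'an a single time, to the multiset of $EM$ points $\{em/N\}$ with $e\in\mathscr{E}$ and $m\in\mathscr{M}$, truncated at frequency $N-1$. Every frequency $\lambda$ is then controlled by Vinogradov's bilinear estimate $|\sum_{e\in\mathscr{E}}\sum_{m\in\mathscr{M}}e_N(\lambda e m)|\le\sqrt{NEM}$. The empty interval of length $H/N\ge d$ forces $d\ll N^{1/2}E^{-1/2}M^{-1/2}\log N$, that is $E\ll d^{-2}N^{1-\sigma}(\log N)^2$. Because of the logarithm, the paper only obtains $\eta<\sigma$.

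\textbf{Your route.} You apply Erd\H{o}s--Tur\'an to each $e$ separately, with a truncation $K=K(d)$ that does not depend on $N$. This forces one large Fourier coefficient of $1_{\mathscr{M}}$ at one of the boundedly many points $ke$. You then count such $e$ by a large-spectrum bound from Parseval, using that $e\mapsto ke$ is a bijection modulo the prime $N$ for $1\le k\le K<N$.

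\textbf{Comparison.} Vinogradov's bilinear bound is itself Cauchy--Schwarz plus Parseval after completing the sum over $e$, so both proofs rest on the same $L^2$ input. Your version avoids the bilinear estimate. It also avoids the $\log N$ loss that comes from the harmonic sum over frequencies up to $N$. As a result you get $E\ll_d N^{1-\sigma}$ and can take $\eta=\sigma$ exactly. The cost is a worse dependence on $1/d$: roughly $d^{-3}$ up to logarithms, from $K\asymp 1/d$ and $c(d)\asymp d/\log(1/d)$, against the paper's $d^{-2}$. This is harmless here because $d$ is fixed.

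The constants in Erd\H{o}s--Tur\'an, which you flagged as the delicate point, are standard. Any fixed $K$ with $C/(K+1)\le d/2$ works.
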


Taking the inversion $e^{-1}$ in the set
$e^{-1}\cdot \mathcal{H}$
is just to ease the proof below.

\begin{proof}
We prove the bound
\[
E\ll \frac{N^3}{H^2M}\cdot (\log N)^2\le \frac{N^{1-\sigma}}{d^2}\cdot (\log N)^2,
\]
then taking arbitrary $\eta<\sigma$ to compensate the logarithmic term 
we are done. 
We identify that $e\in \mathscr{E}$ if it is not representable as 
\[
e=h\textswab{m}^{-1}, \qquad h\in\mathcal{H},\; \textswab{m}\in\mathscr{M}.
\]
Let $e_N(x)=\exp(2\pi ix/N)$. Consider
\[
S(\lambda)= \sum_{e\in \mathscr{E}}\sum_{ \textswab{m}\in \mathscr{M}} e_N(\lambda e\textswab{m}).
\]
By a classical result of Vinogradov~\cite[Chapter VI]{vin} we have
\begin{equation} \label{eq:SS}
|S(\lambda)| \le \sqrt{NEM}.
\end{equation}
Let $\Delta$ be the normalized discrepancy of the set of fractional parts
\[
\Gamma= \left\{ \{ e\textswab{m}/N\}:\;  e\in\mathscr{E},\; \textswab{m}\in\mathscr{M} 
\right\}.
\]
By \eqref{eq:SS} applied to $\lambda=1,2,\ldots,N-1$ and Erd\H{o}s Turan inequality, we get
\begin{equation} \label{eq:Del}
\Delta\ll \frac{ \sqrt{NEM} \log N}{EM}= N^{1/2} E^{-1/2} M^{-1/2} \log N.
\end{equation}
In particular the interval $[(L+1)/N,(L+H)/N]$ 
contains 
\begin{equation} \label{eq:keq}
K=\frac{H}{N} EM + O(\Delta EM)
\end{equation}
elements of the set $\Gamma$.
On the other hand $K=0$ by definition of $\mathscr{E}$. Thus from \eqref{eq:Del}, \eqref{eq:keq} we get
\[
\frac{H}{N} \ll \Delta \ll N^{1/2} E^{-1/2} M^{-1/2} \log N.
\]
Rearrangements yield indeed
\[
E\ll \frac{N^3}{H^2M} (\log N)^2.
\]
%
%
%
%
\end{proof}

\begin{remark}
If $\mathcal{H}=\mathcal{H}_0:=\{1,2,\ldots,H\}$ meaning $L=0$, we could have directly applied~\cite[Theorem 1.1]{garspa}, whose conditions are easily checked to apply.
For general intervals we needed another strategy. 
\end{remark}


\begin{remark}  \label{remark5}
    The claim would be in general 
    false for $\mathcal{H}$ an arbitrary set
    of cardinality $\ge dN$, as for example 
    if $N$ is prime
    we could take $\mathcal{M},\mathcal{H}$ subsets
    of a small subgroup of the multiplicative group modulo $N$. Then $\mathscr{E}$ is also contained in this subgroup.
    A similar argument shows that primality of $N$ is necessary for the conclusion.
\end{remark}

Call $u$ a Zaremba $m$ numerator for $q$ if
$u/q\in F_m$. Call $\mathscr{M}_q\subseteq \{1,2,\ldots,q-1\}$ this set of Zaremba numerators for a given $q$.

A direct consequence of Bourgain, Kontorovich \cite[Theorem 1.8]{bko} together with the Prime Number Theorem can be stated as follows.

\begin{theorem}[Bourgain, Kontorovich] \label{bkthm}
Fix $\gamma\in (0,1)$.
For $m$ large enough and $T$ any large parameter, the following holds: 
A density one set of the primes $q\in [\gamma T,T]$ has the property that
with $\sigma=2\delta-1.001>0$,
where $\delta=\dim_H(F_m)$,
there are $|\mathscr{M}_q|\gg T^{\sigma}$ many Zaremba $m$ numerators for $q$. In fact the 
set of counterexamples $q$ has cardinality
$\ll T^{1-c/\log\log T}$ for some $c>0$.
\end{theorem}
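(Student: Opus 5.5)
The plan is to read the statement off \cite[Theorem 1.8]{bko} and then pass from all integers to primes with the Prime Number Theorem. The key input is the quantitative form of the Bourgain--Kontorovich theorem. Write $\Gamma_m$ for the semigroup generated by the matrices $\begin{pmatrix} 0&1\\ 1&a\end{pmatrix}$ with $1\le a\le m$. For a parameter $N$, they study the multiplicity $R_N(q)$: the number of products $\gamma\in\Gamma_m$ of norm at most $N$ whose lower right entry is $q$. Equivalently, $R_N(q)$ counts finite continued fractions $[a_1,\ldots,a_k]$ with all $a_i\le m$ and denominator $q$. Their main theorem, in the form I will use, says the following. Once $m$ is large enough that $\delta=\dim_H(F_m)$ is close to $1$, every $q\le N$ outside an exceptional set of size $\ll N^{1-c/\log\log N}$ satisfies $R_N(q)\gg N^{2\delta-1-\varepsilon}$. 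This comes from their major/minor arc analysis, where the main term is of order $N^{2\delta-1}$. I will use it with $N=T$ and $\varepsilon=0.001$, so the lower bound becomes $T^{2\delta-1.001}=T^{\sigma}$, and $\sigma>0$ since $\delta>1/2$ for large $m$.

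The first step is to convert the representation count into a count of Zaremba numerators. Each representation $[a_1,\ldots,a_k]=u/q$ with all $a_i\le m$ gives a reduced numerator $u\in\{1,\ldots,q-1\}$ with $u/q\in F_m$, reading $F_m$ as containing the finite continued fractions with partial quotients at most $m$, as the paper intends. The map from representations to $u$ is at most $2$-to-$1$. A rational number has exactly two continued fraction expansions, differing only in the form of the last partial quotient ($a_k$ versus $a_k-1,1$). Hence $|\mathscr{M}_q|\ge R_T(q)/2\gg T^{\sigma}$ for every non-exceptional $q\le T$.

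The second step restricts to primes in $[\gamma T,T]$. By the Prime Number Theorem there are $\asymp_\gamma T/\log T$ such primes. The exceptional $q$ in this range number at most $\ll T^{1-c/\log\log T}$, and $T^{-c/\log\log T}=o(1/\log T)$ because $c\log T/\log\log T$ exceeds $\log\log T$ for large $T$. So the exceptional primes form a proportion $o(1)$ of all primes in $[\gamma T,T]$, which gives density one. The bound $\ll T^{1-c/\log\log T}$ on counterexamples is inherited directly.

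The main obstacle is purely bibliographic. I need to make sure the version of \cite[Theorem 1.8]{bko} gives the lower bound $R_N(q)\gg N^{2\delta-1-\varepsilon}$ for non-exceptional $q$ uniformly over all $q\le N$, including $q\asymp N$, and not merely $R_N(q)>0$. If only positivity were stated, I would go back to their proof: the main term $\mathfrak{S}(q)N^{2\delta-1}$ minus the error terms controlled off the exceptional set is exactly what yields this bound. I would also check that the norm normalization ($\|\gamma\|\le N$ versus $q\le N$) changes only constants, which the factor $\gamma$ and the $\varepsilon$-slack absorb.
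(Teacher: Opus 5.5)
Your proposal takes the same route as the paper: the paper gives no argument beyond calling the statement a direct consequence of \cite[Theorem 1.8]{bko} combined with the Prime Number Theorem, and your two added details are exactly what that citation leaves implicit, namely the at-most-two-to-one passage from representations to Zaremba numerators and the check that $T^{1-c/\log\log T}=o(T/\log T)$. The caveat you raise is legitimate, namely that the multiplicity bound $|\mathscr{M}_q|\gg T^{2\delta-1.001}$ may have to be extracted from the main-term-versus-error structure of the Bourgain--Kontorovich proof rather than read off a stated positivity result; but the paper relies on the same citation without addressing it, so it is not a gap relative to the paper.
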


Fix for now $\gamma\in (0,1)$.
For large $T$,
take the set of primes $\mathcal{P}=\mathcal{P}(T)\subseteq [\gamma T,T]\cap \mathbb{Z}$ of cardinality $|\mathcal{P}(T)|=(1-\gamma-o(1))T/\log T$ from Theorem~\ref{bkthm}.
For each such prime $q\in \mathcal{P}$, we may apply Lemma~\ref{dadada} to
\[
N=q,\quad \mathscr{M}=\mathscr{M}_q,
\quad \mathcal{H}=Iq\cap \mathbb{Z},\quad
d=|I|,
\quad \sigma=2\delta-1.001>0.
\]
Since for each $q\in\mathcal{P}$ the exceptional set $\mathscr{E}_q$ in Lemma~\ref{dadada} has cardinality 
\[
|\mathscr{E}_q|\le N^{1-\eta}\le T^{1-\eta}=o(T/\log T)
\]
negligible compared to the total number of primes in $[\gamma T,T]$,
for each $q\in \mathcal{P}$
there are $\ge (1-\gamma-o(1))T/\log T$ many primes $s=s(q)\in [\gamma T,T]$ in the complementary set $\mathscr{E}_q^c$, for each of which there exists $p=p(q,s)$ so that 
$\{ ps/q\}\in F_m$ for the absolute $m$ above (we may reduce $s\bmod q$ wherever needed, it is easily checked this does not affect any counting below). 
By removing from $\mathscr{E}_q^c\cap [\gamma N,N]$
another negligibly small set 
of cardinality $\le N^{1-c/\log\log N}=o(N/\log N)$ hence also $o(T/\log T)$
if necessary, we may assume that all these primes $s\in \mathscr{E}_q^c\cap [\gamma N,N]$ satisfy Theorem~\ref{bkthm} again.
By pigeon hole principle there is some fixed $s\in \cup_q \mathscr{E}_q^c\cap [\gamma T,T]$, call it $\tilde{s}$, so that the above applies
for a large subset of values $q$ in $\mathcal{P}$ uniformly, call this set $\mathcal{Z}_1\subseteq \mathcal{P}$,
of cardinality $|\mathcal{Z}_1|\ge (1-\gamma-o(1))T/\log T$.
(Indeed in total we have $((1-\gamma)^2-o(1))T^2/(\log T)^2$ many pairs $(q,s)$
partitioning into $(1-\gamma-o(1))T/\log T$ many second coordinates $s$, so some $s$ must have at least $(1-\gamma-o(1))T/\log T$ many hits by first coordinates $q$.)  
As noticed above, by construction, given $q\in \mathcal{Z}_1$, we find some integer $p=p(q,\tilde{s})$ with $\{ p\tilde{s}/q\}\in F_m$. On the other hand, as $\tilde{s}$ satisfies Bourgain-Kontorovich property from Theorem~\ref{bkthm},
again by Lemma~\ref{dadada} applied to
\[
N=\tilde{s},\quad \mathscr{M}=\mathscr{M}_{\tilde{s}},\quad \mathcal{H}=J\tilde{s}\cap \mathbb{Z},\quad d=|J|,\quad \sigma=2\delta-1.001>0
\]
only few integers $\ell\in [1,T]$ can have have the property that $\{\ell\cdot (J\tilde{s}\cap\mathbb{Z})\bmod \tilde{s}\}$ has empty intersection with $\mathscr{M}_{\tilde{s}}$.
Call this set $\mathcal{Z}_2=\mathscr{E}_{\tilde{s}}$ which has cardinality $|\mathcal{Z}_2|=T^{1-\eta}=o(T/\log T)$.
But this means there remain many $q\in\mathcal{Z}_1\setminus \mathcal{Z}_2$, more precisely a set of cardinality $|\mathcal{Z}_1\setminus \mathcal{Z}_2|\ge (1-\gamma-o(1))N/\log N$, for which 
we find $r$ such that $rq/\tilde{s}\in F_m$ as well.
So to sum up, we can take pairs (of primes) $q,s$ where both conditions in \eqref{eq:C3} hold for some $p,r$. Moreover condition
\eqref{eq:C1} holds with $\tau=\gamma^{-1}$ since $q,s\in [\gamma T,T]$, which as
$\gamma\to 1$ indeed
can be chosen arbitrarily close to $1$.
Clearly \eqref{eq:C2} holds by construction as well. 

\begin{remark}
    We once again stress that the main restriction for the bounds in our results comes from Theorem~\ref{bkthm}.
    Currently $\delta>307/312$ is required
    leading to $m=50$, see~\cite[Remark~1.20]{bko}.
    Ideally $\delta>0.5$ and hence $m=2$ suffices, since $\dim_H(F_2)>0.53$ is well-known (see~\cite[Remark~1.17]{bko} for references), which leads to the conjectural
    lower bound $>1/15$ discussed in \S~\ref{intro}. We also mention
    that small improvements in the final bound
    can possibly be made by restricting the 
    digit patterns on $\{1,2,\ldots,m\}$
    in order to improve the factor $(m+2)^{-1}$ derived from Proposition~\ref{p2}, e.g. excluding the string $1m1$.
    This would require a variant of Theorem~\ref{bkthm} encoding these restrictions, moreover the Hausdorff dimension of the resulting set needs 
    to be large enough (probably $>307/312$ again) for the conclusion.
    Clearly the best we can hope for is replacement of $(m+2)^{-1}$ by $m^{-1}$,
    probably $(m+1)^{-1}$ being more realistic.
\end{remark}


    \section{Deduction of Theorem~\ref{T01}
    from Lemma~\ref{lemur} } \label{lise}

    We first state an easy but crucial observation.

    \begin{proposition} \label{pro}
    Let $c,d$ positive real numbers with $cd<1$.
        Let $\xi$ be a real number and assume for some integers $p$ and $q>0$ we have
        \[
        |q\xi-p| < cq^{-2}.
        \]
        Then for any integers $u,v$ so that $u/v\ne p/q$ and 
        $0<v<Q=dq^2$, we have
        \[
        |v\xi-u| \ge (1-cd)\sqrt{d} Q^{-1/2}.
        \]
    \end{proposition}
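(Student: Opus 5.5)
The plan is to use the triangle inequality, comparing $v\xi$ with $vp/q$. Write
\[
|v\xi-u| \ge \left| \frac{vp}{q}-u\right| - v\left|\xi-\frac{p}{q}\right|.
\]
I will bound the first term from below and the second term from above.

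For the first term, the hypothesis $u/v\ne p/q$ means exactly that $vp-uq\neq 0$. Since $vp-uq$ is an integer, $|vp-uq|\ge 1$, and therefore
\[
\left|\frac{vp}{q}-u\right| = \frac{|vp-uq|}{q}\ge \frac{1}{q}.
\]

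For the second term, dividing the assumption $|q\xi-p|<cq^{-2}$ by $q$ gives $|\xi-p/q|<cq^{-3}$. Together with $0<v<Q=dq^2$ this yields
\[
v\left|\xi-\frac{p}{q}\right| < dq^2\cdot cq^{-3}=\frac{cd}{q}.
\]

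Combining the two estimates gives
\[
|v\xi-u|>\frac{1-cd}{q}.
\]
Because $cd<1$, this lower bound is positive. Finally, $Q=dq^2$ means $q=\sqrt{Q/d}$, so
\[
\frac{1}{q}=\sqrt{d}\,Q^{-1/2},
\]
which is the claimed inequality $|v\xi-u|\ge (1-cd)\sqrt{d}\,Q^{-1/2}$ (in fact with strict inequality).

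There is no genuine obstacle in this argument. The only point needing care is that the integrality gap $|vp-uq|\ge1$ uses only $u/v\ne p/q$. It requires neither that $p/q$ be reduced nor that $v$ be coprime to $u$.
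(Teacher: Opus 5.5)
Your proof is correct and is essentially the paper's argument. The paper bounds $|vp-uq|$ (phrased as the determinant of the matrix with rows $(p,q)$ and $(u,v)$) below by $1$ and above by $q|v\xi-u|+v|q\xi-p|\le q|v\xi-u|+cd$, which is exactly your triangle-inequality estimate multiplied through by $q$.
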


    \begin{proof}
        Consider the modulus of the 
        determinant of the matrix 
        \[
        M:= \begin{pmatrix}
            p & q \\
            u & v
        \end{pmatrix}.
        \]
        It is at least $|\det M|\ge 1$ 
        by $p/q\ne u/v$, on the other hand subtracting 
        $\xi$ times the second column from the first column 
        and expanding, we get
        it is at most
        \[
        |\det M|\le q|v\xi-u| + Q cq^{-2} \le q|v\xi-u| +cd.
        \]
        Combining these bounds we get
        \[
        |v\xi-u|\ge (1-cd)q^{-1}= (1-cd)\sqrt{d} Q^{-1/2} .
        \]
    \end{proof}

    The following is well-known~\cite{Perron}.
 
    \begin{proposition} \label{p2}
        We have $|q_kx-p_k|\ge (a_{k+1}+2)^{-1}q_k^{-1}$ for $p_k/q_k=[a_1,\ldots,a_k]$ any convergent to some real $x\in (0,1)$.
    \end{proposition}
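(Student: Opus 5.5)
The statement to prove is Proposition~\ref{p2}: for a convergent $p_k/q_k$ of $x=[a_1,a_2,\ldots]\in(0,1)$ we have $|q_kx-p_k|\ge (a_{k+1}+2)^{-1}q_k^{-1}$. I would use the standard identity expressing the error in terms of the complete quotient. Write $x=[a_1,\ldots,a_k,\alpha_{k+1}]$, where $\alpha_{k+1}=[a_{k+1};a_{k+2},\ldots]$ is the $(k+1)$-st complete quotient. The recurrences $p_{k+1}=a_{k+1}p_k+p_{k-1}$ and $q_{k+1}=a_{k+1}q_k+q_{k-1}$ remain valid with $a_{k+1}$ replaced by the real number $\alpha_{k+1}$. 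This gives
\[
x=\frac{\alpha_{k+1}p_k+p_{k-1}}{\alpha_{k+1}q_k+q_{k-1}}.
\]
Together with the determinant identity $p_kq_{k-1}-p_{k-1}q_k=\pm1$, it follows that
\[
|q_kx-p_k|=\frac{1}{\alpha_{k+1}q_k+q_{k-1}}.
\]

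It then remains to bound the denominator from above. Since $\alpha_{k+1}<a_{k+1}+1$ and $q_{k-1}\le q_k$, we get $\alpha_{k+1}q_k+q_{k-1}<(a_{k+1}+2)q_k$. Inverting yields the claim, in fact with strict inequality.

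The only point requiring care is the indexing convention: the paper writes $x=[a_1,a_2,\ldots]$ with no integer part, so $p_0/q_0=0/1$. I would verify the recurrence and the determinant identity in this convention, including the edge case $k=0$ or $1$ where $q_{k-1}$ may be $0$. In that case the bound only improves, so no genuine obstacle arises. The result is classical, and citing~\cite{Perron} would also suffice.
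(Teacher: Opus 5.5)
Your argument is correct and is exactly the classical complete-quotient proof; the paper gives no proof of its own and simply cites~\cite{Perron} for this well-known fact. One point worth adding: the paper applies the bound to rational $x=\{ps/q\}\in F_m$. There it is only needed for $k$ less than the length of the expansion, so that $a_{k+1}$ is defined, and $\alpha_{k+1}\le a_{k+1}+1$ still gives the stated non-strict inequality (your strict version can become an equality for rational $x$ whose expansion ends in a $1$).
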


 Now we prove Theorem~\ref{T01}.
    We will construct $\xi, \zeta$ by
    \begin{equation} \label{eq:inter}
    \xi=\bigcap I_j, \qquad \zeta=\bigcap J_j
    \end{equation}
    with two sequences of fast shrinking nested intervals to which in each step we apply Conjecture~\ref{lemur}.
    Let $I_0, J_0\subseteq (0.1,0.9)$ arbitrary of positive length be given.
    Choose suitable rationals $p_0/q_0\in I_0$ 
    and $r_0/s_0\in J_0$ written
    in lowest terms satisfying the conditions 
    of Lemma~\ref{lemur} for given $\tau,m$ with $I=I_0$, $J=J_0$, thus in particular
    so that $q_0\asymp_{!\tau} s_0$ with some fixed $\tau>1$. 
    Fix for now arbitrary constants $d, \alpha,\beta$ satisfying
    \begin{equation} \label{eq:cstr}
    d\in(0,1], \qquad 0<\alpha<\beta< \min\{ \frac{\tau}{ d^2(m+2) } , 1/d\}.
    \end{equation}
    Now choose
    that next intervals for $\xi, \zeta$ so that
    \begin{equation} \label{eq:1}
    \alpha q_0^{-3}\le |\xi-p_0/q_0|\le \beta q_0^{-3}, \qquad 
    \alpha s_0^{-3}\le|\zeta-r_0/s_0|\le \beta s_0^{-3},
    \end{equation}
    or equivalently
    \[
    \alpha q_0^{-2}\le |q_0\xi-p_0|\le \beta q_0^{-2}, \qquad 
    \alpha s_0^{-2}\le |s_0\zeta-r_0|\le \beta s_0^{-2}.
    \]
    Considering only the intervals to the right of $p_0/q_0$ resp. $r_0/s_0$, this means we let
    \begin{equation} \label{eq:drueber}
    I_1=[p_0/q_0+\alpha q_0^{-3},p_0/q_0+\beta q_0^{-3}], \quad
    J_1= [r_0/s_0+\alpha s_0^{-3},r_0/s_0+\beta s_0^{-3}].
    \end{equation}
    We may assume that $I_1\subseteq I_0, J_1\subseteq J_0$, otherwise (if $p_0/q_0$ resp. $r_0/s_0$ are very close to the boundaries of $I_0$ resp. $J_0$) we argue below
    for slightly smaller subintervals of $I_0,J_0$.
    Let 
    \[
    Q=Q_0= \tau^{-1}d q_0s_0 \in (\tau^{-2}dq_0^2, dq_0^2)\cap (\tau^{-2}ds_0^2, ds_0^2),
    \]
    where we used condition \eqref{eq:C1}.
    From Proposition~\ref{pro}
    with $c=\beta$ and the present $d$
    we get that
    if $(u,v)\in\mathbb{Z}^2$ with $1\le u\le Q$ is not a multiple of $(p_0,q_0)$ then
    \[
    |u \xi- v| \ge (1-\beta d)\sqrt{d} Q^{-1/2}
    \]
    similarly if $(u,w)\in\mathbb{Z}^2$ is not multiple of $(r_0,s_0)$ then
    \[
    |u \zeta- w| \ge (1-\beta d)\sqrt{d} Q^{-1/2}.
    \]
     Hence if both applies then
     \begin{equation}  \label{eq:D1}
     Q |u \xi- v|\cdot |u \zeta- w| \ge (1-\beta d)^2d>0.
     \end{equation}
     Thus we are only left with
     the excluded two types of multiples.

     So let first $u=q_0y$ and $v=p_0y$ for some integer $1\le y\le Q/q_0$. Then
     \[
     |u \xi- v|= y |q_0 \xi- p_0|\ge \alpha y q_0^{-2}\ge 
     \frac{\alpha}{\tau^2}d\cdot yQ^{-1}.
     \]
     Hence if it is true that for some absolute constant
     $C>0$
     \begin{equation} \label{eq:c1}
     |u \zeta- w|= |y\cdot q_0\zeta-w|\ge   
     Cy^{-1}
     \end{equation}
     for all integers $w$, then again we can conclude
     \begin{equation} \label{eq:D2}
     Q |u \xi- v|\cdot |u \zeta- w| \ge \frac{C\alpha}{\tau^2}d>0.
     \end{equation}
     The other case $u=s_0y$ and $w=r_0y$ works analogously,
     assuming for $1\le y\le Q/s_0$ we have
     \begin{equation} \label{eq:c2}
     |u \xi- z|= |y\cdot s_0\xi-z|\ge  
     Cy^{-1}
     \end{equation}
     for all integers $z$, we again conclude the analogue of \eqref{eq:D2}.
     We show that the conditions of Conjecture \ref{lemur} imply these estimates \eqref{eq:c1}, \eqref{eq:c2}
     for some explicitly computable $C$.
     
     By condition \eqref{eq:C1} of Lemma~\ref{lemur}
     for our $\tau$ chosen above
     we see $Q/q_0=ds_0/\tau<dq_0\le q_0$ and $Q/s_0=dq_0/\tau<ds_0\le s_0$.
     Hence by condition \eqref{eq:C3} 
     of Lemma \ref{lemur} and Proposition \ref{p2} we have that
     \[
     \Vert ys_0(p_0/q_0)\Vert\ge (m+2)^{-1}y^{-1}, \qquad 
     \Vert yq_0(r_0/s_0)\Vert\ge (m+2)^{-1}y^{-1}
     \]
     for all integers $1\le y\le Q/s_0$ resp.
     $1\le y\le Q/q_0$. However recall \eqref{eq:1} implies
     \[
     \vert \xi-p_0/q_0\vert\le \beta q_0^{-3}, \qquad
     \vert \zeta-r_0/s_0\vert\le \beta s_0^{-3},
     \]
     so by condition \eqref{eq:C1} of Lemma \ref{lemur} 
     and $y\le Q/s_0=\tau^{-1}dq_0$
     we can
     estimate 
     \[
     ys_0\vert \xi-p_0/q_0\vert\le 
     (\tau^{-1}dq_0)s_0\cdot \beta q_0^{-3}=\tau^{-1}d\beta(s_0/q_0)q_0^{-1}<
     \tau^{-1}d\beta \tau q_0^{-1}\le
     \beta\tau^{-1}d^2 y^{-1}
     \]
     and further that
     \[
     \Vert ys_0\xi\Vert
     =\Vert ys_0[p_0/q_0+(\xi-p_0/q_0)]\Vert
     \ge
     \Vert ys_0(p_0/q_0)\Vert-ys_0\vert \xi-p_0/q_0\vert
     \ge
     [(m+2)^{-1}-\beta\tau^{-1}d^2]y^{-1}.
    \]
    Very similarly we derive
    \[
     \Vert yq_0\zeta\Vert\ge [(m+2)^{-1}-\beta\tau^{-1}d^2]y^{-1}
     \]
     for all $y\le Q/q_0<s_0$ as above. Thus we 
     conclude that \eqref{eq:c1} and \eqref{eq:c2} hold for
     \[
     C=(m+2)^{-1}-\beta\tau^{-1}d^2>0.
     \]
     %
      %
     Combining the two cases \eqref{eq:D1}, \eqref{eq:D2},
     for $Q=Q_0$ we get
     \[
     Q \min_{q\in \mathbb{Z},0<q\le Q} \Vert q\xi\Vert\cdot \Vert q \zeta\Vert 
     \ge \min\left\{ (1-\beta d)^2d\;,\; \alpha d\cdot  \frac{  \frac{1}{m+2}-\beta \tau^{-1}d^2}{\tau^2}\right\}
     >0.
     \]
      We may take $\tau$ arbitrarily close to $1$ by Lemma~\ref{lemur}, henceforth by continuity we may assume $\tau=1$.
      Moreover we can take $\alpha$ arbitrarily close to $\beta$, so similarly let us assume $\alpha=\beta$.
      Accordingly we seek to maximize
      \[
      \min\left\{ (1-\beta d)^2d\;,\; \beta d\cdot  \left( \frac{1}{m+2}-\beta d^2\right) \right\}
      \]
      under the constraints \eqref{eq:cstr}
      with $\tau=1$.
     Taking $m=50$ and the equilibrium point
     \begin{equation}  \label{eq:jety}
     d=0.01465, \qquad \beta= \frac{2d+\frac{1}{52}-\sqrt{ \frac{d}{13}+\frac{1}{52^2}-4d^2 }}{4d^2}=27.1023\ldots
     \end{equation}
      we readily check \eqref{eq:cstr} holds and we get a bound 
      \[
     Q \min_{q\in \mathbb{Z},0<q\le Q} \Vert q\xi\Vert\cdot \Vert q \zeta\Vert 
     >0.005326.
     \]

     This concludes the first step. 
     
     Now notice that
     via \eqref{eq:1} 
     we have freedom in the choices
     of $\xi, \zeta$ in the non-trivial intervals $I_1\subseteq I_0, J_1\subseteq J_0$ defined in \eqref{eq:drueber}. 
     Now we repeat step 1 starting from suitable fractions
     $p_1/q_1\in I_1$ and $r_1/s_1\in J_1$ satisfying the hypotheses of Lemma \ref{lemur} for the new intervals $I=I_1, J=J_1$, where the constant $C$ from \eqref{eq:c1}, \eqref{eq:c2} is the same. 
     Again 
     this gives us 
     new intervals $I_2\subseteq I_1, J_2\subseteq J_1$. We iterate the process
     ad infinitum, and finally define $\xi, \zeta$ by
     \eqref{eq:inter} which is clearly well-defined as the intervals are compact and nested. For these we get a sequence
     of values $Q_k\to\infty$ where
     \[
     \liminf_{k\to\infty} Q_k \min_{q\in \mathbb{Z},0<q\le Q_k} \Vert q\xi\Vert\cdot \Vert q \zeta\Vert>0.005326,
     \]
     thus in particular 
     \[
     \limsup_{Q\to\infty} Q \min_{q\in \mathbb{Z},0<q\le Q} \Vert q\xi\Vert\cdot \Vert q \zeta\Vert 
     >0.005326
     >0
     \]
     so $\xi,\zeta$ are not satisfying ULC. 

     Since we can start with $I_0\ni \xi, J_0\ni \zeta$ arbitrary intervals in our construction, we get that $\Theta$ is dense. 
     To see $\Theta$ is also $G_{\delta}$, note that by continuity for any $Q\ge 1$ the set 
     \[
     \Sigma_Q:= \{ (\xi,\zeta)\in\mathbb{R}^2: \; 
      Q \min_{q\in \mathbb{Z},0<q\le Q} \Vert q\xi\Vert\cdot \Vert q \zeta\Vert \le 1/188 \}
     \]
     is closed. Hence the sets $\Sigma^k:=\cap_{k\ge Q} \Sigma_Q$ where this holds for all large enough integers $Q$ are closed as well. Thus its complement $(\cup_{k\ge 1} \Sigma^k)^c=\cap_{k} (\Sigma^k)^c=\Theta$ is indeed a $G_{\delta}$ set. 
     Finally, as already pointed out in~\S~\ref{intro}, the claim on the packing dimension follows now from Lemma~\ref{icha}, the claim $\Theta+\Theta=\mathbb{R}^2$ from 
     Proposition~\ref{erpro}.

    \begin{remark}
        The values in \eqref{eq:jety} are approximations to some complicated optimal algebraic numbers, as pointed out in Remark~\ref{remark1}.
    \end{remark}

\section{Proof of Theorem~\ref{2t} }

\subsection{Preparatory results}
We will frequently use the following easy observation in the spirit of Proposition~\ref{pro}.

\begin{proposition} \label{dieprop}
    If $p/q$ is a convergent of $\xi$ and $(r,s)$ is not a multiple of $(p,q)$ and $|s|<|q\xi-p|^{-1}/3$, then
    we have
    \[
    |s\xi-r|\ge \frac{1}{3q}.
    \]
\end{proposition}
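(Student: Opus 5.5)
The plan is to mimic the determinant argument of Proposition~\ref{pro}. Write $\delta:=|q\xi-p|$. Since $p/q$ is a convergent, $\gcd(p,q)=1$ and $\delta<1/q$ (in fact $\delta\le 1/q_{\text{next}}$). I will consider the integer matrix
\[
M=\begin{pmatrix} p & q\\ r & s\end{pmatrix}.
\]
Because $(r,s)$ is not a multiple of $(p,q)$, the two rows are linearly independent over $\mathbb{Q}$: if they were dependent, primitivity of $(p,q)$ would force $(r,s)$ to be an integer multiple of $(p,q)$. Hence $|\det M|\ge 1$. The zero vector counts as the multiple $0\cdot(p,q)$, so it is excluded.

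Next I bound the determinant from above. Subtracting $\xi$ times the second column from the first gives
\[
\det M=(p-q\xi)s-(r-s\xi)q ,
\]
so that
\[
1\le |\det M|\le |s|\,\delta+q\,|s\xi-r|.
\]
The hypothesis $|s|<\delta^{-1}/3$ yields $|s|\delta<1/3$. Therefore $q|s\xi-r|>2/3$, that is, $|s\xi-r|>2/(3q)\ge 1/(3q)$, which is the claim with room to spare.

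The only point needing care is the linear-independence step. This is where the convergent hypothesis enters, through the coprimality of $p$ and $q$. One should also note that no sign restriction on $s$ is needed, since absolute values are used throughout. I do not expect any real obstacle. The factor $1/3$ is just slack, presumably chosen so that later applications can absorb constants, for instance a variant where $\delta$ is replaced by a nearby quantity.
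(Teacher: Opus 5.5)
Your proof is correct and is essentially the paper's argument. Both bound the determinant $ps-qr$ below by $1$ using linear independence, and above by $|s|\,|q\xi-p|+q\,|s\xi-r|$ after the column operation. The only differences are that you argue directly rather than by contradiction, which gives the slightly stronger bound $|s\xi-r|>2/(3q)$, and that you make explicit the coprimality of $p,q$, which the paper uses implicitly for the independence step.
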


\begin{proof}
    The determinant of the matrix 
      \[
        M:= \begin{pmatrix}
            p & q \\
            r & s
        \end{pmatrix}
        \]
    has modulus $|\det M|\ge 1$ by linear independence. On the other hand if we assume the claim is false then subtracting $\xi$ times the second from the first column yields
    it is $|\det M|\le s|q\xi-p|+q|r\xi-s|\le 1/3+1/3=2/3<1$. So we get a contradiction.
\end{proof}

The next lemma is essentially known, we state a short proof for convenience.

\begin{lemma}  \label{gdelta}
    Let $g: \mathbb{N}\to (0,\infty)$ and
     $h: \mathbb{N}\to [0,\infty)$
    be arbitrary functions tending to $0$ and with $h(q)<g(q)$ for all $q\in\mathbb{N}$. Let $X\subseteq \mathbb{N}$ unbounded. Then the set
    \[
    Y:= \left\{ \xi\in\mathbb{R}:\; h(q)<|\xi-p/q|< g(q)\quad for\; infinitely\; many\; pairs\; p\in \mathbb{Z}, \; q\in X  \right\}
    \]
    is a dense $G_{\delta}$ set.
\end{lemma}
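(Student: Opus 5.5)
The plan is to write $Y$ as a countable intersection of open dense sets, which gives both halves of the statement at once. For each $n\in\mathbb{N}$ put
\[
U_n:=\bigcup_{q\in X,\; q\ge n}\;\bigcup_{p\in\mathbb{Z}} \left( \left(\frac{p}{q}-g(q),\,\frac{p}{q}-h(q)\right)\cup\left(\frac{p}{q}+h(q),\,\frac{p}{q}+g(q)\right)\right),
\]
that is, the set of $\xi$ with $h(q)<|\xi-p/q|<g(q)$ for some $p\in\mathbb{Z}$ and some $q\in X$ with $q\ge n$. Since $h(q)<g(q)$, each piece is a nonempty open interval, so $U_n$ is open.

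Next I will check that $Y=\bigcap_n U_n$. If $\xi\in Y$, there are infinitely many admissible pairs $(p,q)$. For fixed $q$ only finitely many $p$ can satisfy $|\xi-p/q|<g(q)$, so infinitely many distinct $q\in X$ occur, and these are unbounded. Hence $\xi\in U_n$ for every $n$. Conversely, if $\xi\in U_n$ for all $n$, this produces admissible pairs with $q\ge n$ for every $n$, hence infinitely many pairs, so $\xi\in Y$. Thus $Y$ is $G_\delta$.

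The key step is showing that each $U_n$ is dense. Take any nonempty open interval $(a,b)$. Since $X$ is unbounded and $g(q)\to 0$, I can choose $q\in X$ with $q\ge n$, $1/q<(b-a)/3$ and $g(q)<(b-a)/3$. Some $p/q$ then lies in $(a+(b-a)/3,\,b-(b-a)/3)$, because consecutive fractions with denominator $q$ are $1/q$ apart. The interval $(p/q+h(q),\,p/q+g(q))$ is nonempty and lies inside $(a,b)$, so $U_n\cap(a,b)\neq\emptyset$.

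Having each $U_n$ open and dense, Baire's theorem gives that $Y$ is dense as well, so $Y$ is a dense $G_\delta$ set. I do not expect a real obstacle here. The only points needing care are that nonemptiness of the annuli uses the strict inequality $h<g$, and that "infinitely many pairs" must force infinitely many denominators, which follows from finiteness in $p$ for each fixed $q$.
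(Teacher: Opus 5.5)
Your proof is correct and is essentially the paper's argument: your open sets $U_n$ are exactly the complements of the paper's closed sets $L_n=\bigcap_{Q\ge n,\,Q\in X}K_Q$, and your remark that only finitely many $p$ work for each fixed $q$ makes explicit a step the paper leaves tacit. The only difference is packaging: the paper gets density by an explicit nested-interval construction of a point of $Y$ in a given interval, whereas you show each $U_n$ is dense and invoke Baire's theorem, which encodes the same construction.
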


The special case $g(q)=e^{-q}, h\equiv 0$
and $X=\mathbb{N}$ implies the well-known result that the set $\mathscr{L}_1$ of Liouville numbers 
is dense $G_{\delta}$, already noticed in \S~\ref{intro}.
It can be readily generalized to simultaneous approximation.

\begin{proof}
      To see the density, given any non-empty open interval
      $I_0$, choose $q=q_0\in X$ large enough so that $I_1:=(p/q-g(q),p/q+g(q))\subseteq I_0$, actually $I_1$ should be in the interior in $I_0$. 
      Let $J_1=(p/q+h(q),p/q+g(q))\subseteq I_1$.
      Then repeat the process with $I_2:=J_1$ to generate $I_3\subseteq J_1$ and then $J_2\subseteq I_3\subseteq J_1$,
      and so on. The unique number $\xi=\cap J_i$ 
      lies in $I_0\cap Y$ and $I_0$ was arbitrary.
      For the $G_{\delta}$ property,
      by continuity,
      for any integer $Q\ge 1$
      the set 
      \[
      K_q:= \{ \xi: |p/q-\xi|\notin (h(q),g(q)),\; \forall p\in\mathbb{Z}\}
      \]
      is closed. Hence $L_{Q_0}:=\cap_{Q\ge Q_0, Q\in S} K_Q$
      is closed as well for all $Q_0$. Thus
      $\cup_{Q\ge 1} L_{Q}$ is an $F_{\sigma}$ set.
      However 
      its complement is exactly the set $Y$ of the lemma, consequently a $G_{\delta}$ set. 
\end{proof}

\subsection{Proof of Theorem~\ref{2t}}

    For the moment let $\Psi$ be arbitrary.
Let $\boldsymbol{\zeta}=(\zeta_1,\ldots,\zeta_{m-1})\in \mathcal{A}_{m-1}(\Psi)$ be arbitrary. Then
    \begin{equation}  \label{eq:schreib}
    \prod_{i=1}^{m-1} |s\zeta_i-u_i|\ge C\cdot \Psi(s)/s
    \end{equation}
    for some $C=C(\boldsymbol{\zeta})>0$ and all integers $u_i,s\ne 0$.

We show that a dense $G_{\delta}$ set of Liouville numbers $\xi$, independent of $\boldsymbol{\zeta}$, induces a positive limsup
    for $(\xi,\zeta_1,\ldots,\zeta_{m-1})$.
    We construct suitable $\xi$ as an infinite intersection of nested compact intervals $I_j$.
    Consider $I_0\subseteq (0,1)$ is defined.
   Let $p/q=p_0/q_0\in I_0$ in lowest terms
    be arbitrary and take $Q=Q_0$ large enough that $\Phi(Q)>q$. We may pick $Q$ such that additionally
    \begin{equation}  \label{eq:truhe}
        \Psi(Q)\le K\cdot \min_{0<t\le Q} \Psi(t)
    \end{equation}
    for fixed $K>0$.
    To see this, first observe that
    if $\liminf_{t\to\infty} \Psi(t)=0$
    then we can take $K=1$ for some suitable values of $Q$, since the domain of $\Phi$ is discrete (thus compact).
    Assume otherwise $\liminf_{t\to\infty} \Psi(t)>0$. Note that conversely we may assume $\liminf_{t\to\infty} \Psi(t)<\infty$, as otherwise by Dirichlet's Theorem 
    $\mathcal{A}_{m-1}(\Psi)=\emptyset$ and the claim of the theorem is trivial. However, this means that
    \eqref{eq:truhe} holds for some absolute $K$ and any sequence of values $Q$ that realizes the liminf.
    
    We choose $\xi\in I_1:=[p/q+(1/6)q^{-1}Q^{-1},p/q+(1/3)q^{-1}Q^{-1}]\subseteq I_0$ 
    so that $p/q$ is
    a good approximation to $\xi$, more precisely
    \[
    1/6 \le |q\xi-p|Q \le 1/3.
    \]
    We study products $(s\xi-r)\prod (s\zeta_i-u_i)$ for $0<s\le Q$ and consider two cases.
    
    Case 1: $(r,s)$ is multiple of $(p,q)$.
    Write $(r,s)=N(p,q)$ with $N\le Q/q$ so that $s\le Q$. Then
    \[
    |s\xi-r| =N\cdot |q\xi-p| \ge (1/6) N Q^{-1}
    \]
    On the other hand, since $\zeta\in\mathcal{A}_{m-1}(\Psi)$, for any integers $s$ and $u_i$ by \eqref{eq:schreib}, \eqref{eq:truhe}
    \[
    \prod_{i=1}^{m-1} |s\zeta_i -u_i| \ge C\Psi(s)s^{-1}=C\Psi(s)N^{-1}q^{-1}\ge CK^{-1}\Psi(Q)N^{-1}q^{-1}.
    \]
    So the product satisfies
    \[
    |s\xi-r| \cdot \prod_{i=1}^{m-1} |s\zeta_i -u_i| \ge \frac{C}{6K}\Psi(Q)Q^{-1}q^{-1}> \frac{C}{6K}Q^{-1}\Psi(Q)/\Phi(Q)
    \]
    and so
     \begin{equation} \label{eq:rest2}
    \frac{Q \Phi(Q)}{\Psi(Q)} \min_{q\in \mathbb{Z},0<q\le Q} \Vert q\xi\Vert\cdot \Vert q \zeta\Vert  \ge \frac{C}{6K}> 0.
    \end{equation}

     Case 2: $(r,s)$ is linearly independent  with $(p,q)$. Then since $0<s\le Q\le |q\xi-p|^{-1}/3$ we may apply Proposition~\ref{dieprop} which yields
    \[
    |s\xi-r|\ge q^{-1}/3> \Phi(Q)^{-1}/3.
    \]
    On the other hand again for any integers $u_i$ in view of \eqref{eq:schreib}, \eqref{eq:truhe} we have
    \[
    \prod_{i=1}^{m-1} |s\zeta_i -u_i| \ge C\Psi(s)s^{-1}\ge \frac{C}{K}\Psi(Q) Q^{-1}.
    \]
   Taking the product, 
    again \eqref{eq:rest2} follows for $Q=Q_0$.
    This concludes the first step. Now
    we take a new arbitrary $p/q=p_1/q_1\in I_1$ 
    and again take $Q=Q_1$ so that
    $\Phi(Q)>q_1$ and
    define $I_2=[p/q+(1/6)q^{-1}Q^{-1},p/q+(1/3)q^{-1}Q^{-1}]\subseteq I_1$
    and proceed as above. Continuing this process, this induces a sequence $Q=Q_k\to\infty$ for which \eqref{eq:rest2} holds for $\xi=\cap_{j\ge 1} I_j$.  

    To see $\mathcal{G}$ is a dense $G_{\delta}$ set, note that any number satisfying $|p/q-\xi|< (1/3)/\Phi^{-1}(q)$ for infinitely many integer pairs $p,q$ lies in $\mathcal{G}$ (upon choosing suitable $Q$), where $\Phi^{-1}(x)=\min\{t: \Phi(y)\ge x,\; \forall y\ge t\}$ denotes essentially the inverse function. We conclude by Lemma~\ref{gdelta} that indeed
     \[
     \mathcal{A}_{m-1}(\Psi)\times \mathcal{G}\subseteq
    \mathcal{B}_m(\Psi/\Phi).
    \]

\section{Proof of Theorem~\ref{t2} }

Proof of (i): Write $S=\{p\}$. Let for now fixed $0<\beta<1$ be fixed.
Let $\xi$ be any real number so that for some integer sequences $a_k\to\infty$ and $u_k$ with $p\nmid u_k$, that may depend on $\xi$, we have 
\begin{equation} \label{eq:Neu}
\vert p^{a_k}\xi-u_k\vert< \beta p^{-a_k}.
\end{equation}
Let $Q_k=p^{a_{k}}-1$. 
Given an integer $0<s\le Q_k$, write 
$s=mp^v$ with $p\nmid m$ and $m\le Q_k/p^v\le p^{a_k-v}$, so that
 \[
 |s|_p= p^{-v}.
\]
On the other hand, since $u_k/p^{a_k}-r/s\ne 0$ has denominator dividing $s\cdot p^{a_k-v}$
\[
|u_k/p^{a_k}-r/s|\ge (sp^{a_k-v})^{-1} 
=s^{-1}\cdot p^{v-a_k},
\]
hence
\[
|\xi-r/s|\ge |u_k/p^{a_k}-r/s|-|\xi-u_k/p^{a_k}|
\ge s^{-1}\cdot p^{v-a_k}
-\beta p^{-2a_k}.
\]
Thus for large $k$ we get
\begin{align*}
|s|_p\cdot |s\xi-r| &\ge p^{-v}(p^{v-a_k}-\beta sp^{-2a_k})=p^{-a_k}-\beta sp^{-2a_k-v}
\\
&\ge p^{-a_k}-\beta p^{-a_k}= (1-\beta)p^{-a_k}\ge (1-2\beta) Q_k^{-1}.
\end{align*}
Thus it follows that
\[
\limsup_{Q\to\infty} Q\min_{0<s\le Q} |s|_p\cdot |s\xi-r|\ge 1-2\beta.
\]
By choosing instead a sequence $\beta_k\to 0$
the right hand side can be made exactly $1$.
The reverse inequality for the limsup in $\Gamma$ is clear by Dirichlet's Theorem.

The set of $\xi$ with the property \eqref{eq:Neu} are dense $G_{\delta}$ by Lemma~\ref{gdelta}, moreover it has Hausdorff dimension $1/2$ by Borosh and Fraenkel~\cite{bofr}. Hence the same applies to $\Gamma$.
    Thus $\Gamma$ has full packing dimension by Lemma~\ref{icha}, and $\Gamma+\Gamma=\Gamma\cdot \Gamma=\mathbb{R}$
    follows from Proposition~\ref{erpro}. 

Proof of (ii):
Assume without loss of generality $2\notin S$, for any other excluded prime the below proof works analogously.
    Assume $\xi$ is any number such that
    for integer sequences $a_k\to\infty$ and
    $p_k$
    \begin{equation} \label{eq:phioben}
    \frac{1}{6\Phi^{-1}(2^{a_k})}<|2^{a_k}\xi-p_k|<  \frac{1}{3\Phi^{-1}(2^{a_k})},
    \end{equation}
    where $\Phi^{-1}(x)=\min\{t\in\mathbb{N}: \Phi(y)\ge x,\; \forall y\ge t\}$ for $x\in\mathbb{R}_{>1}$ denotes essentially the inverse function.
    Let
    \[
    Q_k=\Phi^{-1}(2^{a_k}).
    \]
    Note that then
    \[
    \Phi(Q_k)\ge 2^{a_k} ,\qquad  Q_k^{-1}/3>|2^{a_k}\xi-p_k|> Q_k^{-1}/6.
    \]
    Write $q_k=2^{a_k}$. We distinguish two cases:\\
    
    Case 1: $(r,s)$ is a multiple of $(p_k,q_k)$. Then
    $r=yp_k, s=yq_k$ for an integer $y\ge 1$ and
    since $|q_k|_{S}=1$ by construction we infer
    \begin{align*}
        Q_k\Phi(Q_k)\cdot |s\xi-r|\cdot |s|_{S}&\ge Q_k\Phi(Q_k)\cdot |yq_k-yp_k|\cdot |yq_k|_{S}=
    Q_k\Phi(Q_k)\cdot y |q_k\xi-p_k| \cdot |y|_{S}\\
    &\ge Q_k\Phi(Q_k)\cdot |q_k\xi-p_k|\ge Q_k\Phi(Q_k)\cdot Q_k^{-1}/6= \Phi(Q_k)/6\gg 1. 
    \end{align*}

    Case 2: $(r,s)$ is not a multiple of $(p_k,q_k)$. Then since
    \[
    s\le Q_k<|q_k\xi-p_k|^{-1}/3
    \]
    we can apply Proposition~\ref{dieprop} and get
    \[
    Q_k\Phi(Q_k)|s\xi-r|\cdot |s|_{S}\ge Q_k\Phi(Q_k)\cdot (q_k^{-1}/3)\cdot Q_k^{-1}= \Phi(Q_k)q_k^{-1}/3\ge \frac{1}{3}.
    \]
    Hence we can reach a positive upper limit.
    Starting with arbitrary $\Phi$ and applying above argument to $\tilde{\Phi}=\sqrt{\Phi}$ (which tends to infinity as well but slower), we see that we can replace
    the right hand side by a sequence tending to infinity
    as $k\to\infty$.

    The remaining argument is as in (i), noting that the set satisfying \eqref{eq:phioben} again is dense $G_{\delta}$ by Lemma~\ref{gdelta}.


\begin{remark}
We may directly use Tricot's inequality from the proof of Lemma~\ref{icha} to verify full packing dimension in (i), (ii) without the intermediate 
step of showing residuality. To conclude as in the lemma, it suffices to
construct for given $x\in\mathbb{R}$ 
some $\xi$ as in Theorem~\ref{t2}, (i) or (ii),
and some Liouville number $\ell\in \mathscr{L}_1$ so that $\ell+\xi=x$.
In view of the considerations above (the $p$-adic/binary digits of $\xi$ can be freely chosen 
in long ranges),
by an easy adaption
of Erd\H{o}s' constructive 
proof of $\mathscr{L}_1+\mathscr{L}_1=\mathbb{R}$ in~\cite{erd},
suitable $p$-adic/binary 
expansions of $\ell, \xi$ can be readily
derived from the $p$-adic/binary expansion of $x$. 
Similarly the claimed sum and product set properties can be proved directly.
\end{remark}

Proof of (iii): Let $p=S^c$. For $\xi\in\mathbb{Q}$ the claim is obvious, so assume $\xi$ is irrational. Let $(r_k/s_k)_{k\ge 1}$ be the sequence of convergents to $\xi$. For given large $Q$, let $k$ be the largest integer with $s_k\le Q$. 
If $p\nmid s_k$, then $|s_k|_{S}= s_k^{-1}$
and hence by Dirichlet's Theorem
\[
\Vert s_k\xi\Vert\cdot |s_k|_{S} \le s_k^{-1}Q^{-1}
\]
we obtain
\[
Q \min_{q\in \mathbb{Z},0<q\le Q} \Vert q\xi\Vert \cdot \vert q \vert_{S}\le Q \Vert s_k\xi\Vert\cdot |s_k|_{S} \le s_{k}^{-1}
\]
and since $s_k\to\infty$ as $Q\to\infty$ we are done.
So we may assume $p| s_k$. Note that then $p\nmid s_{k-1}$ and more generally $p\nmid s_k+ts_{k-1}$ for any $t\ne 0$.

Let $t$ be the largest non-zero integer so that $s_k+ ts_{k-1}\le Q$. Note $t$ may be negative.
If $t\ge 1$, then let $q=s_k+ ts_{k-1}$.
It follows easily from continued fraction theory and $Q<s_{k+1}$ that
\[
\Vert q\xi\Vert \ll s_k^{-1}.
\]
Moreover 
\[
|q|_{S}=q^{-1}\ll Q^{-1}
\]
since $Q<s_k+(t+1)s_{k-1}<2(s_k+ts_{k-1})=2q$.
Again we conclude via $s_k\to\infty$.

Finally suppose $t<0$. Then $Q<s_k+s_{k-1}<2s_k$. Hence
\[
\Vert s_{k-1}\xi\Vert\le s_{k}^{-1}\le 2Q^{-1}.
\]
Thus again by
\[
|s_{k-1}|_S= s_{k-1}^{-1}
\]
we conclude via $s_{k-1}\to\infty$ as $k\to\infty$.


  \section{Open problems} \label{oppro}

  \subsection{Some open problems}
    We want to state some open problems and conjectures.
    Our proofs of the claims above suggest
    a metrical refinement of Theorem~\ref{T01}.

\begin{conjecture}   \label{CCC}
  The set $\Theta$ in Theorem~\ref{T01}
 has Hausdorff dimension at least $2/3$.
\end{conjecture}

   If Conjecture~\ref{CCC} indeed does apply, this would show another strong contrast to the classical Littlewood problem, since counterexamples to \eqref{eq:lw} have been shown to be a set of Hausdorff dimension zero~\cite{ein}.
   See \S~\ref{Co2} for heuristic arguments supporting Conjecture~\ref{CCC}. Besides Theorem~\ref{tth} and Theorem~\ref{t2} give some indication for the stronger bound $1$.
    
    Next we note that our construction strongly suggests that a sequence of
    values $Q_k\to\infty$ for which a strictly positive upper limit in
    Theorem~\ref{T01}
    is attained can be taken of bounded logarithmic growth, i.e. $\log Q_{k+1}/\log Q_k\ll 1$, probably the upper bound $2+o(1)$ as $k\to\infty$ suffices. However our method also definitely requires the reverse estimate $\limsup_{k\to\infty}\log Q_{k+1}/\log Q_k\ge 2$ since $\xi, \zeta$ in our examples have irrationality exponent at least three. This motivates the following problem.

    \begin{problem} \label{P1}
        Do there exist real numbers $\xi, \zeta$ so that for a sequence $Q_k\to\infty$ of growth $\log Q_{k+1}/\log Q_k=1+o(1)$ as $k\to\infty$, we have
        \[
        \lim_{k\to\infty} Q_k \min_{q\in \mathbb{Z},0<q\le Q_k} \Vert q\xi\Vert\cdot \Vert q \zeta\Vert > 0.
        \]
    \end{problem}

    It is easily checked 
    that the classical Littlewood Conjecture~\eqref{eq:lw}
    being false is equivalent to the stronger
    condition $Q_{k+1}/Q_k\ll 1$ in context
    of Problem~\ref{P1}.
    A related problem is to find counterexamples of vectors $(\xi,\zeta)$ whose entries are badly approximable (have bounded partial quotients), or alternatively which are badly approximable as vectors in $\mathbb{R}^2$. 
    We use the notation \eqref{eq:FGH}.

    \begin{problem} \label{P3}
       Do there exist $(\xi,\zeta)\in \mathcal{B}_2(\mathbbm{1})$, i.e. such that
         \[
        \limsup_{Q\to\infty} Q \min_{q\in \mathbb{Z},0<q\le Q} \Vert q\xi\Vert\cdot \Vert q \zeta\Vert  > 0,
        \]
       and additionally
       \begin{itemize}
            \item[(i)]
        $(\xi, \zeta)\in Bad_1\times Bad_1$ (or alternatively at least one of them in $Bad_1$) and/or
        \item[(ii)] $(\xi,\zeta)\in Bad_2$?
         \end{itemize}
    \end{problem}

    A positive answer to (i), (ii) simultaneously in Problem~\ref{P3} would provide a slightly more convincing argument against the classical Littlewood Conjecture~\eqref{eq:lw} than Theorem~\ref{T01}, whereas a negative answer to (i) or (ii) would imply the Littlewood Conjecture.
    Next, in our proof of Theorem~\ref{T01}, it appears pivotal to have only two real variables.
    This motivates

    \begin{problem}[ULC for three variables]
        Do there exist real $\xi_1, \xi_2,\xi_3$ such that 
        \[
        \limsup_{Q\to\infty} Q \min_{q\in \mathbb{Z},0<q\le Q} \Vert q\xi_1\Vert\cdot \Vert q \xi_2\Vert \cdot\Vert q\xi_3\Vert > 0.
        \]
    \end{problem}

This is the special case $m=3, n=1$ of~\cite[Question~1.11]{bfk}.
In this context, we point out that a dual
version of Theorem~\ref{T01} for a single 
linear form in two variables can be derived
from transference methods,
namely that
\[
\sup_{(\xi,\zeta)\in\mathbb{R}^2 }\;\limsup_{Q\to\infty} Q^2\min_{q_i\in \mathbb{Z},0<q_1q_2\le Q} \Vert q_1\xi+q_2\zeta\Vert \ge c
\]
for some effectively computable $c>0$.
We omit computation of a suitable value $c$.
We next ask a presumably difficult question about the ``multiplicative Dirichlet spectrum'' as defined below.

\begin{problem}
    Determine the multiplicative Dirichlet spectrum
    \[
      \left\{ \limsup_{Q\to\infty} Q \min_{q\in \mathbb{Z},0<q\le Q} \Vert q\xi\Vert \cdot \Vert q \zeta\Vert: (\xi,\zeta)\in\mathbb{R}^2\right\}\subseteq [0,1/2].
    \]
\end{problem}

The minimal information we get
from combination of~\cite{bfk} and Theorem~\ref{T01} is that it has at least
two elements one of them being $\{0\}$. Theorem~\ref{T01} and its proof suggest the spectrum contains some interval $[0,c]$, $0<c\le 1/2$. Theorem~\ref{2t} indicates it may be the entire interval $[0,1/2]$. The topological method from~\cite{aw} may be helpful to show that the spectrum is an interval, if true.

Finally we want to ask a question about~\S~\ref{s1.2}.

\begin{problem}
    Does Theorem~\ref{2t} remain true for $\Phi=\mathbbm{1}$? 
\end{problem}

If so, this would also strengthen the subsequent Corollaries~\ref{tth}-\ref{lastc} in a natural way.

\subsection{On Conjecture~\ref{CCC} } \label{Co2}

We finally provide some evidence for Conjecture~\ref{CCC} via some twist in our argument in \S~\ref{lemproof}.
The conjecture is supported by the following heuristic argument: 
For $0<\alpha<\beta$ small enough parameters, let $\Omega\subseteq \mathbb{R}$ be the set of 
real numbers $\xi$ satisfying
\begin{equation} \label{eq:SAT}
\alpha q^{-3}<|\xi-p/q|< \beta q^{-3}
\end{equation}
 for infinitely many rationals $p/q$ with $q$ being prime and not in the exceptional
 set of Theorem~\ref{bkthm}. 
Now the following two hypotheses combined would imply the conjecture:

{\em Hypothesis I}: The set $\Omega$ above 
has the same Hausdorff dimension $2/3$ as the entire set of real numbers with irrationality exponent three (or equivalently as for the set for which we impose only the upper bound in \eqref{eq:SAT}).

{\em Hypothesis II}: A generic number $\xi\in\Omega$ (in particular still a set of the same Hausdorff dimension) induces some $\zeta$ so that
 $(\xi,\zeta)$ is a counterexample to ULC as in Theorem \ref{T01}.
 
 If both hypotheses hold, then
 Conjecture~\ref{CCC} holds by non-increase
 of Hausdorff dimension under projections, since $(\xi,\zeta)\to \xi$ surjectively projects
 a set of counterexamples to ULC onto $\Omega$.

Hypothesis I is suggested by results on exact approximation~\cite{bug, bug2} and results 
 from approximation with denominators from a given set by
 Borosh and Fraenkel~\cite{bofr}. The latter implies that the
 according superset of $\Omega$ where only the upper bound in \eqref{eq:SAT} holds (with prime denominators)
 indeed has
 the same Hausdorff dimension $2/3$ as the entire set of real numbers with irrationality exponent three. Combining the methods is likely to close the gap and prove Hypothesis I.

Hypothesis II is more delicate and requires to modify the line of arguments in \S~\ref{lemproof}. Given $p/q$ a good approximation as in \eqref{eq:SAT} to $\xi\in\Omega$ with $q$ prime, we always 
find many $s$ (to be precise $\gg q^{\sigma}$ many, for $\sigma$ as in Theorem~\ref{bkthm}) so that
$\{ ps/q\} \in F_m$, by just taking $s=up^{-1}\bmod q$ for $u$ any Zaremba $m$ numerator for $q$. We expect that many of these $s$ are prime and of size roughly $q$, as a generic
choice of values for $s$ induces a counterprobability (for primality) of order
$(1-1/\log q)^{q^{\sigma}}\to 0$ 
as $q\to\infty$
(easily seen via 
and $s\asymp q$ and
$s^{\sigma}\gg (\log s)^y$
for arbitrary $y\ge 1$
and the classical formula $\lim_{n\to\infty}(1-1/n)^n=1/e$) for none of them being prime. For each such $s$, 
further
by Lemma~\ref{dadada} it is suggestive
that we find $r$ so that $\{ rq/s\}\in F_m$
as well, as otherwise $q$ would lie in its small set of counterexamples of cardinality $\ll s^{1-\eta}$ for which such $r$ does not exist. So we should expect an infinite subsequence of the well approximating
rationals from \eqref{eq:SAT} to induce $r,s$ so that $p,q,r,s$ satisfy the conditions of Lemma~\ref{lemur} with any $\tau>1$ and $m=50$, and we can conclude
as in \S~\ref{lise}. 

Regarding both hypotheses,
note also that primality
of $q,s$ may not be necessary for the argument. It is just used for direct application of Lemma~\ref{dadada}.
See however Remark~\ref{remark5}.

We point out that
even the following stronger fibered version of Conjecture~\ref{CCC} may apply: 
For any $\xi\in\Omega$, there
is $\zeta$ such that ULC fails for the pair $(\xi,\zeta)$ (with $c>1/188$ again). This would mean the exceptional set of Hypothesis II is empty.

While $2/3$ is a rather moderate conjectural bound,
the best Hausdorff dimension bound we can hope for with our method is $5/3$ 
due to the condition \eqref{eq:SAT} being involved (even if omitting primality condition; in fact this condition applies to  both $\xi, \zeta$, but the Hausdorff dimension of the Cartesian product of such sets could nevertheless possibly be as large as $2/3+1=5/3$). 

{\em Acknowledgements: The author
warmly thanks Igor Shparlinski for providing help with Lemma 4.1. }

\newpage


\begin{thebibliography}{99}

\bibitem{aw} A. Agon, B. Weiss. The Dirichlet spectrum. {\em arXiv:2412.05858}.

\bibitem{badz} D. Badziahin. Computation of the infimum in the Littlewood conjecture. {\em Exp. Math.} 25 (2016), no. 1, 100--105.

  \bibitem{bfk} P. Bandi, R. Fregoli, D. Kleinbock.
  Submanifold-genericity of $\mathbb{R}^d$-actions and
 uniform multiplicative Diophantine approximation. {\em arXiv: 2504.02258}.


  \bibitem{bofr} I. Borosh, A.S. Fraenkel.
A generalization of Jarn\'ik’s theorem on Diophantine approximations.
Nederl. Akad. Wetensch. Proc. Ser. A 75.
{\em Indag. Math.} 34 (1972), 193--201.

  \bibitem{bko} J. Bourgain, A. Kontorovich. On Zaremba's conjecture. {\em Ann. of Math.} (2) 180 (2014), no. 1, 137--196.

 \bibitem{ven} J. Bourgain, E. Lindenstrauss, P. Michel, A. Venkatesh. Some effective results for $\times a\times b$. {\em Ergodic Theory Dynam. Systems} 29 (2009), no. 6, 1705--1722. 

 \bibitem{bug} Y. Bugeaud. Sets of exact approximation order by rational numbers. {\em Math. Ann.} 327 (2003), no. 1, 171--190.

 \bibitem{bug2} Y. Bugeaud. Sets of exact approximation order by rational numbers. II.
{\em Unif. Distrib. Theory} 3 (2008), no. 2, 9--20.


  \bibitem{ein} M. Einsiedler, A. Katok, E. Lindenstrauss. Invariant measures and the set of exceptions to Littlewood's conjecture. {\em Ann. of Math.} 164 (2006), no. 2, 513--560.

  \bibitem{erd} P. Erd\H{o}s. Representations of real numbers as sums and products of Liouville numbers. {\em Michigan
Math. J.} 9 (1962), 59--60.

\bibitem{frokan} D.A. Frolenkov, I.D. Khan. A strengthening of a theorem of Bourgain-Kontorovich II. {\em Mosc. J. Comb. Number Theory} 4 (2014), no. 1, 78--117.

\bibitem{garspa} M.Z. Garaev, I.E. Shparlinski. On some congruences and exponential sums. {\em Finite Fields Appl.} 98 (2024), Paper No. 102451, 17 pp. 



  \bibitem{jarnik} V. Jarn\'ik. Zur metrischen Theorie der diophantischen Approximationen. Prace Mat.-Fiz. 36 (1928/29), 91--106. 
 
   \bibitem{kw} D. Kleinbock, C. Wu. Simultaneously bounded and dense orbits for commuting Cartan actions. {\em arXiv: 2509.05272}.

   \bibitem{maul} R.D. Mauldin, M. Urba\'nski, 
   Conformal iterated function systems with applications to the geometry of
continued fractions. {\em Trans. Amer. Math. Soc.} 351 (1999), no. 12 (1999).
   
   
   Dimensions and measures in infinite iterated function systems, Proc. London Math. Soc. 73 (1996), no. 1, 105--154.

   \bibitem{oxtoby}  J. Oxtoby. Measure and Category. A survey of the analogies between topological and measure spaces.
Second edition. Graduate Texts in Mathematics, 2. Springer-Verlag, New York-Berlin, 1980. x+106
pp.

   \bibitem{Perron} Perron. Lehre von den Kettenbr\"uchen. Textbook Springer 1977.

   \bibitem{icharxiv} J. Schleischitz. Metric results on inhomogeneously singular vectors. {\em arXiv: 2201.1527}.

   \bibitem{schmidt} W.M. Schmidt. On badly approximable numbers and certain games. {\em Trans. A.M.S.} 123 (1966), 27--50.

   \bibitem{tricot} C. Tricot Jr. Two definitions of fractional dimension. {\em Math. Proc. Cambridge Philos. Soc.} 91 (1982),
no. 1, 57--74.

\bibitem{vin} I. M. Vinogradov. An Introduction to the Theory of Numbers (Pergamon Press, 1955).

   \bibitem{zar} S. K. Zaremba. La m\'ethode des “bons treillis” pour le calcul des int\'egrales multiples. In Applications
of number theory to numerical analysis (Proc. Sympos., Univ. Montreal, Montreal, Que., 1971), pages
39–119. Academic Press, New York, 1972. 2.


   \end{thebibliography}
\end{document}